\documentclass[12pt]{amsart}
\usepackage{amssymb,amsmath,graphicx,verbatim,eufrak,amsthm}
\usepackage{color}
\usepackage{xcolor}
\usepackage{mdframed}

\usepackage{hyperref}
\hypersetup{
    colorlinks=true,       
    linkcolor=red,          
    citecolor=green,        
    filecolor=magenta,      
    urlcolor=cyan           
}

\vfuzz2pt 
\hfuzz2pt 
\linespread{1.5}
\addtolength{\textwidth}{2cm} %
\addtolength{\hoffset}{-1cm}  %




\newcommand{\thmc}[1]{{\color{#1} $\bullet$}}

\newtheorem{thm}{\thmc{blue} Theorem}
\newtheorem{cor}[thm]{\thmc{cyan} Corollary}
\newtheorem{lem}[thm]{\thmc{cyan} Lemma}

\newtheorem{prop}[thm]{\thmc{cyan} Proposition}

\newtheorem{ques}[thm]{Question}
\newtheorem{conj}[thm]{Conjecture}

\newtheorem*{clm*}{Claim}

\theoremstyle{definition}
\newtheorem{dfn}[thm]{\thmc{red} Definition}
\newtheorem{exm1}[thm]{Example}

\theoremstyle{remark}
\newtheorem{rem}[thm]{\thmc{violet} Remark}

\newenvironment{lem*}[1]{\vspace{1ex}\noindent
{\bf Lemma* (#1).} [restatement]  \hspace{0.5em} \em }{ }
\newenvironment{thm*}[1]{\vspace{1ex}\noindent
{\bf Theorem* (#1).} [restatement]  \hspace{0.5em} \em }{ }

\newenvironment{exm}%
{\begin{exm1}}%
{\hfill$\bigtriangleup\bigtriangledown\bigtriangleup$
\end{exm1} }

%
%
%
%
%
%



\newcommand{\IP}[1]{\left\langle #1 \right\rangle}
\newcommand{\iP}[1]{\langle #1 \rangle}

\newcommand{\set}[1]{\left\{#1\right\}}

\newcommand{\sr}[1]{\left(#1\right)}

\newcommand{\Integer}{\mathbb{Z}}

\newcommand{\Z}{\Integer}

\newcommand{\R}{\mathbb{R}}

\newcommand{\eps}{\varepsilon}

\newcommand{\ie}{{\em i.e.\ }}
\newcommand{\eg}{{\em e.g.\ }}



\DeclareMathOperator{\E}{\mathbb{E}}     

\renewcommand{\Pr}{}
\let\Pr\relax
\DeclareMathOperator{\Pr}{\mathbb{P}}

\newcommand{\1}[1]{\mathbf{1}_{\set{ #1 } }}

\def\squareforqed{\hbox{\rlap{$\sqcap$}$\sqcup$}}
\def\qed{\ifmmode\squareforqed\else{\unskip\nobreak\hfil
\penalty50\hskip1em\null\nobreak\hfil\squareforqed
\parfillskip=0pt\finalhyphendemerits=0\endgraf}\fi}


\newcommand{\ignore}[1]{ }

\newcommand{\note}[1]{ }


\newcommand{\p}{\partial}

\newcommand{\dist}{\mathrm{dist}}

\newcommand{\AND}{\qquad \textrm{and} \qquad}


%




\usepackage{capt-of}


\newcommand{\DLA}{\mathsf{DLA}}

\newcommand{\pmin}{\pi_{\min}}
\newcommand{\pmax}{\pi_{\max}}
\newcommand{\tmix}{t_\mathrm{mix}}

\begin{document}

\title{Harmonic Measure in the Presence of a Spectral Gap}
\author[Itai Benjamini]{Itai Benjamini$^1$}
\thanks{$^1$Weizmann Institute of Science.
{\em email:} \texttt{itai.benjamini@weizmann.ac.il} } 
\author[Ariel Yadin]{Ariel Yadin$^2$}
\thanks{$^2$Ben-Gurion University of the Negev. {\em email:} \texttt{yadina@bgu.ac.il}}

\maketitle

\begin{abstract}
We study harmonic measure in finite graphs with an emphasis on 
expanders, that is, positive spectral gap.
It is shown that if the spectral gap is positive then for all sets that are not too large
the harmonic measure from a uniform starting point 
is not more than a constant factor of the uniform measure on the set.
For large sets there is a tight logarithmic correction factor.
We also show that positive spectral gap does not allow for a fixed proportion of the harmonic measure of sets to 
be supported on small subsets, in contrast to the situation in Euclidean space.
The results are quantitative as a function of the spectral gap, and apply also when the spectral gap
decays to $0$ as the size of the graph grows to infinity.
As an application we consider a model of {\em diffusion limited aggregation}, or $\DLA$, on finite graphs, 
obtaining  upper bounds on the growth rate of the aggregate.
\end{abstract}

\section{Introduction}

Given a set of vertices $S$ in a graph, start a random walk from some initial distribution, until it hits $S$.
The probability that the random walk first hits $S$ at a vertex $y$, 
is a probability measure on $S$. This probability 
measure is called the {\em harmonic measure}.

Harmonic measure for Brownian motion in Euclidean space was thoroughly studied with 
spectacular achievements and some fundamental still open problems
(see \eg \cite{Bo, Garnett2005}).
Beyond conformal invariance in two dimensions, the {\it doubling property} or scale invariance was key to the analysis of harmonic measure of subsets of the Euclidean space.

In this note we would like to focus on harmonic measure in the context of finite graphs with small diameter. 
When the graph is rapidly mixing it is natural to expect that the harmonic measure will
be more uniformly spread out. Indeed basic results in this direction are established here.

We consider harmonic measure in the setting of graphs with uniformly bounded 
spectral gap, also known as {\em expander graphs}.
The first main result shows that for subsets that are not too large, 
when starting from the stationary measure,
the harmonic measure 
of a point is at most a constant multiple of the uniform measure.
When the sets in question have large volume, there is a multiplicative logarithmic correction term. 
See Theorem \ref{thm: main}.
This bound is tight as Example \ref{exm: tree expander} shows.
This result may be viewed as a Buerling-type estimate:  it bounds from above the harmonic measure at any point,
showing that no specific vertex in $S$ can carry too much mass of the harmonic measure.

All our results are quantitative, so that they carry over to the case where 
the graphs are not expanders, taking into consideration the asymptotics 
of the spectral gap as the graph size tends to infinity.


\subsection{Support of harmonic measure}

For expander graphs, 
Theorem \ref{thm: no Makarov} shows that for any set in an expander, 
any fixed proportion of the harmonic measure of the set $S$ cannot be supported on small subsets.
It is also shown that this characterizes expander graphs, for a precise statement see 
Proposition \ref{prop: expander characterization}.

The other extreme is the context of polynomial growth, where we believe the following to hold.
Let $(G_n)_n$ be a sequence of finite, connected, vertex transitive graphs with size growing to infinity, 
and uniformly bounded degree, such that $ |G_n | = o( \mathrm{diam}(G_n)^d)$ for some $d>0$.
Then for any set $S_n \subset G_n$ and fixed starting vertex $x_n \in G_n$, 
the harmonic measure of $S_n$ from $x_n$ is supported on a set of size $o(|G_n|)$. 

A strategy to prove this is along the lines of 
adapting the Euclidean case proof \cite{B97} 
using a structure theorem for such graphs \cite{BFT12}.
We plan to pursue this line of thought together with Romain Tessera.

More involved behavior arises for groups which are neither polynomial nor expanders.
One example is when the group  $G$ is  the {\em lamplighter} over $\Z/n\Z$ 
(that is, the group $G = \set{0,1} \wr \Z/n \Z$, see \eg \cite{Woess2005}).  In this case
there is a set $S$ of size proportional to $|G|$ for which harmonic measure is mostly supported on a subset of $S$
of size proportional to $|S|$, obtained by adapting the example in \cite{B97};
namely, let $S$ be the set in which more than $n/2$ of the lamps are on and the lamp at $0$ is also on.  
This suggests a somewhat hybrid  picture for harmonic measure on 
Cayley graphs of super polynomial growth, which are not expanders.

Let us conclude with
\begin{conj}
Assume $G$ is an infinite graph which admits the doubling property; 
that is, there exists a universal constant $C>0$ such that for all $r>0$ and all $x$, 
$| B(x,2r) | \leq C | B(x,r) |$, where $B(x,r)$ is the ball of radius $r$ around $x$ in the graph metric.
Then, as $r \to \infty$, for any subset $S \subset B(x,r)$ and any $z \not\in B(x,r)$, 
$1-o(1)$ of the harmonic measure of $S$ from $z$ is supported on a subset of $S$ of size $o(|B(x,r)|)$.
 \end{conj}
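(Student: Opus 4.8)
The plan is to adapt the proof of the Euclidean statement --- that harmonic measure of a set in $\R^d$ carries full mass on a subset of vanishing Lebesgue measure, the relevant input being Bourgain's theorem on the dimension of harmonic measure; see \cite{B97} for the argument we wish to mirror --- to the present discrete setting, using the doubling hypothesis as a substitute for Euclidean scale invariance. First observe that doubling forces uniformly bounded degree (since $|B(x,1)|\le C\,|B(x,\tfrac12)|=C$) and polynomial volume growth: there is $D=\log_2 C$ with $|B(x,R)|\le C'\,(R/\rho)^{D}\,|B(y,\rho)|$ whenever $B(y,\rho)\subseteq B(x,R)$. Write $\mu=\mu^{z}_{S}$ for the hitting distribution of $S$ by the walk started at $z$; since $z\notin B(x,r)$ and $S\subseteq B(x,r)$ the walk must cross the sphere $\partial B(x,r)$ before hitting $S$, and by the strong Markov property we may decompose $\mu$ at intermediate hitting times of smaller balls.

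The heart of the matter is a discrete, doubling analogue of Bourgain's key lemma. Fix a large integer $K=K(C)$ and, for a ball $B'=B(w,\rho)$, cover it by balls $B'_1,\dots,B'_m$ of radius $\rho/K$, so that $m\le K^{O(D)}$ by doubling. Call a child $B'_i$ \emph{fat} if $|S\cap B'_i|\ge\eta\,|B'_i|$ for a threshold $\eta=\eta(C)$. The lemma to establish is that harmonic measure cannot spread evenly across the children: there are $\eta(C)>0$ and $\beta(C)>0$ such that for every such $B'$ there is a sub-collection of at most $m^{\,1-\beta}$ children carrying at least half of $\mu(S\cap B')$. Its proof should follow Bourgain's: a ball in which $S$ has density $\ge\eta$ absorbs the walk within its double with probability bounded below (a quantitative Wiener-criterion / capacity estimate), so by the maximum principle $\mu(S\cap B'_i)$ is comparable to $\mu(B'_i)$ for a fat child, and a disjointness / last-exit argument then forces the fat children to be few; the non-fat children are handled directly by the volume bound, as they contain few vertices of $S$ relative to their size. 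Iterating this lemma over the $L\asymp\log_K r$ scales $\rho=r,\,r/K,\,r/K^2,\dots$ produces a set $A\subseteq S$ which at every one of these scales is concentrated in an $m^{-\beta}$-proportion of the cells; a volume count using doubling then gives $|A|=o(|B(x,r)|)$, while $\mu(A)\ge 1-o(1)$. The iteration is the delicate part of Bourgain's argument: the factor gained per scale cannot simply be multiplied over the $L\to\infty$ scales, so, as in \cite{B97}, one must instead run an averaging (entropy) argument over the whole tree of cells rather than cell by cell.

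I expect two genuine obstacles. The first is the potential theory: Bourgain's proof leans on Harnack inequalities, the maximum principle, and the quantitative statement that a positive-density set has capacity comparable to the ambient ball, and on a graph these are classically available in full strength only under an additional Poincaré-type condition (equivalently, sub-Gaussian heat-kernel bounds) beyond doubling. Part of the work is therefore to either derive the capacity lower bound from doubling alone or to isolate exactly which weaker substitute suffices; it is conceivable that the conjecture is easier than full Bourgain, since only a zero-density conclusion (not a dimension drop) is asked for, which might be reachable with cruder estimates. The second obstacle is that the random walk, unlike Brownian motion confined to a domain, leaves and re-enters the cells of the multiscale decomposition repeatedly, so the localization of $\mu$ to a cell $B'$ must be done carefully --- for instance by conditioning on the last visit to $\partial B'$ before $S$ is hit and using Harnack to compare entrance and exit distributions --- and it is in making this localization compatible with the scale-by-scale iteration that the doubling hypothesis must be used most essentially.
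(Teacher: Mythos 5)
The statement you are addressing is presented in the paper as a \emph{conjecture}: the authors give no proof, only the remark (made for the closely related vertex-transitive, polynomial-growth version) that a strategy would be ``along the lines of adapting the Euclidean case proof'' of \cite{B97}, possibly with the structure theorem of \cite{BFT12} as additional input. So there is no proof in the paper to compare against. Your proposal is a program for an open problem, and --- as you yourself say --- it is not yet a proof: the two ``genuine obstacles'' you flag are precisely the mathematical content of the conjecture, and neither is resolved. The overall route (a multiscale decomposition into $m$ children balls, a Bourgain-type one-scale lemma saying at most $m^{1-\beta}$ children carry half the harmonic measure, and an iteration over the $\asymp \log r$ scales) is exactly the strategy the authors gesture at, so the approach is the intended one; what is missing is everything that would make it work.

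Concretely: (i) the ``fat child'' step requires a quantitative hitting estimate --- that a subset of density $\eta$ in a ball is hit, before exiting the double ball, with probability bounded below uniformly over starting points. This does \emph{not} follow from volume doubling alone; it is essentially an elliptic Harnack / Poincar\'e-type input, and there are doubling graphs (trees of polynomial growth, graphs with bottlenecks) where such capacity lower bounds fail. Without it a fat child can carry negligible harmonic measure for geometric rather than density reasons, and the dichotomy ``fat children are few, thin children are small'' does not localize $\mu$. You raise the possibility that the weaker zero-density conclusion might be reachable with cruder estimates, but you do not exhibit such an estimate. (ii) Even granting the one-scale lemma, the iteration is the genuinely delicate part of \cite{B97}: the per-scale saving cannot be multiplied over $L\to\infty$ scales because the exceptional mass accumulates, and the averaging/entropy argument over the whole tree of cells has to be actually carried out and made compatible with the fact that the discrete walk re-enters cells repeatedly. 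Identifying where the difficulty lies is valuable, but as written the proposal establishes nothing beyond the authors' own one-sentence suggestion, and the statement remains open.
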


\subsection{$\DLA$}

The study of harmonic measure is key to the still lacking understanding of the $\DLA$ growth process.
It will be of interest to understand harmonic measure and Beurling-type estimates on nilpotent Cayley graphs
(see \cite{Kesten87DLA, Kesten90DLA}; also, Theorem \ref{thm: main} below is a Beurling-type estimate).
In the last section we formulate a Kesten-type result regarding the $\DLA$ aggregate in the presence of positive 
spectral gap, see Theorem \ref{thm: DLA expander}.

\section{Preliminaries and notation}

\subsection{Notation}

We consider a reversible Markov chain on finite state space $G$, 
with transition matrix $P$ and reversing probability measure $\pi$.
We use $(X_t)_t$ to denote the Markov process; \ie
$$ \Pr [ X_{t+1} = y \ | \ X_t = x ] = P(x,y) . $$
$T_S,T_S^+$ to denote the hitting and return times to a set $S$; that is
$$ T_S = \inf \set{ t \geq 0 \ : \ X_t \in S } \AND T_S^+ = \inf \set{ t \geq 1 \ : \ X_t \in S} . $$
For a path $\gamma$, we use $\gamma[s,t]$ to denote the path $(\gamma_s , \ldots, \gamma_t)$.
$\Pr_\mu , \E_\mu$ denote probability measure and expectation conditioned on $X_0$ having distribution $\mu$.
When no starting measure is specified, we refer to starting from the stationary measure $\pi$.
We use $\pmin = \min_x \pi(x)$ and $\pmax = \max_x \pi(x)$.
We denote the spectral gap of $P$ by $1-\lambda$; that is if 
$1 = \lambda_1 > \lambda_2 \geq \cdots \geq \lambda_n  \geq -1$ then $\lambda = \max_{j >1} |\lambda_j|$.

Note that the harmonic measure of any set does not change if we pass from 
$P$ to the {\em lazy} chain $\tfrac12 (I+P)$. The lazy chain has non-negative eigenvalues, and
the spectral gap only changes by a factor of $2$.  
So without loss of generality we will always assume that $\lambda_n > -1$.
That is, throughout this paper we always work with irreducible and aperiodic chains $P$.

By simple random walk on a graph $G$ we refer to the Markov chain whose transition matrix $P$ is given 
by $P(x,y) = \tfrac{1}{\deg(x)} \1{ x \sim y} $. 
When $G$ is a regular finite graph, for the simple random walk $\pi$ is the uniform measure.

$a \vee b$ denotes $\max \set{a,b}$ and $a \wedge b$ denotes $\min \set{a,b}$.

\subsection{Basic facts about hitting times}

This section is a review of known facts, and we include proofs for completeness.

\begin{lem}
\label{lem: expander mixing}
Let $S \subset G$.  Then,
$$ \Pr_\pi [ T_S^+ > t ] \leq \sr{ 1 - (1-\lambda) \pi(S) }^{t/2}  . $$
\end{lem}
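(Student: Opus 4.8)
The plan is to realize $\Pr_\pi[T_S^+ > t]$ as a bilinear form $\IP{\ind{S^c}, Q^{t-1}\ind{S^c}}$ in a power of the substochastic matrix $Q$ obtained by killing the walk on $S$, and then to control the operator norm of $Q$ in terms of the spectral gap.

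First I would set $S^c = G \setminus S$ and let $Q$ be the restriction of $P$ to $S^c$, i.e.\ the $S^c \times S^c$ matrix with $Q(x,y) = P(x,y)$ for $x,y \in S^c$. By reversibility $\pi(x)P(x,y) = \pi(y)P(y,x)$, so $Q$ is self-adjoint for the inner product $\IP{f,g} = \sum_{x \in S^c}\pi(x)f(x)g(x)$; I write $\norm{f}$ for the induced norm and $\ind{S^c}$ for the vector equal to $1$ on all of $S^c$. Now $\set{T_S^+ > t}$ is precisely the event that the walk avoids $S$ at each of the times $1,\dots,t$, and — since the chain started from $\pi$ is stationary — this event has the same probability as avoiding $S$ at the times $0,\dots,t-1$; expanding the latter over the trajectory gives, for $t \geq 1$,
\[ \Pr_\pi[T_S^+ > t] = \IP{\ind{S^c}, Q^{t-1}\ind{S^c}} \]
(the case $t = 0$ being trivial, as the asserted bound is then $1$).

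The heart of the matter, and the step I expect to be the main obstacle, is the estimate $\norm{Q}_{\mathrm{op}} \leq 1 - (1-\lambda)\pi(S)$. To prove it, given $f$ on $S^c$ I would extend it by $0$ on $S$ to a vector $\tilde f$ on $G$, so that $\IP{f,Qf} = \IP{\tilde f, P\tilde f}_\pi$ and $\norm{f} = \norm{\tilde f}_\pi$ for the full $\pi$-inner product $\IP{\cdot,\cdot}_\pi$. Writing $\tilde f = c\mathbf{1} + g$ with $c = \IP{\tilde f, \mathbf{1}}_\pi$ and $\IP{g,\mathbf{1}}_\pi = 0$, and using $P\mathbf{1} = \mathbf{1}$ together with $\IP{g,Pg}_\pi \leq \lambda\norm{g}_\pi^2$ (valid since $g$ is orthogonal to the leading eigenfunction and $|\lambda_j| \leq \lambda$ for $j>1$), one gets
\[ \IP{\tilde f, P\tilde f}_\pi = c^2 + \IP{g,Pg}_\pi \leq \lambda\norm{\tilde f}_\pi^2 + (1-\lambda)c^2 . \]
Since $\tilde f$ vanishes on $S$, $c = \IP{\tilde f, \ind{S^c}}_\pi$, so Cauchy--Schwarz gives $c^2 \leq \pi(S^c)\norm{\tilde f}_\pi^2$; substituting and using $\lambda + (1-\lambda)\pi(S^c) = 1 - (1-\lambda)\pi(S)$ gives $\IP{f,Qf} \leq \big(1-(1-\lambda)\pi(S)\big)\norm{f}^2$. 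For the opposite inequality, $\IP{f,Qf} = \IP{\tilde f, P\tilde f}_\pi \geq \lambda_n\norm{\tilde f}_\pi^2 \geq -\lambda\norm{f}^2$, and $\lambda \leq 1-(1-\lambda)\pi(S)$ because $\pi(S) \leq 1$; as $Q$ is self-adjoint these two inequalities give the operator-norm bound.

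To finish, Cauchy--Schwarz, submultiplicativity of the operator norm, and $\norm{\ind{S^c}}^2 = \pi(S^c)$ give
\[ \Pr_\pi[T_S^+ > t] = \IP{\ind{S^c}, Q^{t-1}\ind{S^c}} \leq \norm{Q}_{\mathrm{op}}^{t-1}\,\pi(S^c) \leq \big(1-(1-\lambda)\pi(S)\big)^{t-1}\,(1-\pi(S)) , \]
and since $1-\pi(S) \leq 1-(1-\lambda)\pi(S) \leq 1$ the right-hand side is at most $\big(1-(1-\lambda)\pi(S)\big)^{t}$ — in fact slightly stronger than the stated $\big(1-(1-\lambda)\pi(S)\big)^{t/2}$. (If one prefers to work with a manifestly positive semidefinite operator, one may instead thin the trajectory to the times $1,3,5,\dots$, which form a Markov chain with kernel $P^2$, spectral parameter $\lambda^2$, and stationary measure $\pi$, and run the same estimate with $P^2$: this produces exactly the exponent $t/2$ and avoids the remark about $\lambda_n$.)
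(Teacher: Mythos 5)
Your proof is correct and follows essentially the same route as the paper: realize the survival probability as a quadratic form in powers of the killed operator and bound that operator's norm via the spectral gap plus Cauchy--Schwarz applied to the projection onto constants. The one (harmless) difference is that you restrict $Q$ to $S^c$, making it self-adjoint, which lets you bound $\IP{f,Qf}$ directly rather than $\IP{Qf,Qf}$ as the paper does; this yields the slightly stronger exponent $t$ in place of $t/2$.
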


\begin{proof}
Variants of this lemma are known, and we include the proof for completeness.
We follow a method from \cite{alon2004probabilistic}.

We consider the space of functions $f:G \to \R$ with inner product
$\IP{f,g} : = \sum_x f(x) g(x) \pi(x)$.
Let $P$ be the transition matrix of the random walk on $G$.
It is well known that because the random walk is reversible with respect to $\pi$,
$P$ is a self-adjoint operator, and thus we may find an orthonormal basis $1 = f_1, f_2, \ldots,f_n$
($|G|=n$) of eigenvectors of $P$, with $P f_j = \lambda_j f_j$ and
$1 = \lambda_1 > \lambda_2 \geq \cdots \geq \lambda_n > -1$.
Recall that $1-\lambda = 1 - \max_{j>1} |\lambda_j|$.

Now, for a set $S \subset G$ let $Q = Q_S$ be the matrix $Q(x,y) = P(x,y) \1{ y \not\in S }$.
It is immediate that
$$ \pi(x) \Pr_x [ T_S^+ > t ] = \pi(x) \sum_{y} Q^t(x,y) = \iP{ Q^t 1 , \delta_x} . $$
Let us bound $\IP{Q f , Q f}$:
Define $\tilde{f}(x) = f(x) \1{x \not\in S}$.
Then $Q f = Q \tilde f = P \tilde f$.
Also, $\iP{\tilde f, \tilde f } \leq \iP{f,f}$.
Thus, for all $f \neq 0$, using the orthonormal decomposition
$ \tilde f = \sum_{j=1}^n \iP{ \tilde f, f_j } f_j $, we obtain:
$ \iP{ \tilde f , \tilde f } = \sum_{j=1}^n \iP{ \tilde f , f_j }^2 , $
and by Cauchy-Schwarz,
\begin{align*}
\iP{ \tilde f , 1 }^2 & = \sr{ \sum_{x \not\in S} \pi(x) f(x) }^2 \leq (1 - \pi(S) ) \cdot \sum_{x \not\in S} \pi(x) f(x)^2
\leq (1- \pi(S) ) \cdot \iP{ \tilde f , \tilde f} .
\end{align*}
Moreover,
\begin{align*}
\iP{ Q f , Q f } & = \iP{ P \tilde f , P \tilde f }  = \sum_{j=1}^n \lambda_j^2 \iP{ \tilde f , f_j}^2
\leq \iP{ \tilde f , 1}^2 + \lambda^2  \sum_{j=2}^n \iP{ \tilde f , f_j}^2 \\
& = \iP{\tilde f , 1 }^2 (1-\lambda^2) + \lambda^2 \iP{ \tilde f , \tilde f }
\\
& \leq
\sr{ 1 - \pi(S) (1-\lambda^2) } \cdot  \iP{ \tilde f , \tilde f } .
\end{align*}
Since $\iP{ \tilde f , \tilde f} \leq \iP{f,f}$ we get that for any $f$ with $\iP{f,f}=1$,
$\iP{Q f , Qf } \leq  1 - \pi(S) (1-\lambda^2) $.  
Another application of Cauchy-Schwarz gives,
$$ \Pr_\pi [ T_S^+ > t ] = \iP{ Q^t 1 , 1 } \leq \sqrt{ \iP{ Q^t 1 , Q^t 1 }  } \leq \sr{ 1 - \pi(S) (1-\lambda) }^{t/2} . $$
\end{proof}

We use the notation $\tmix : = \lceil  \frac{\log (2 / \pmin )}{1-\lambda} \rceil$,
which is convenient because of the next classical proposition.

\begin{prop}
\label{prop: mix}
For any $t \geq \frac{\log (2 / \pmin )}{1-\lambda}$ we have for all $x,y \in G$,
$$ \frac{1}{2} \pi(y) \leq \Pr_x [ X_t = y ] \leq \frac{3}{2} \pi(y) . $$
\end{prop}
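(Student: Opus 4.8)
The plan is to run the standard spectral-decomposition argument for the $L^\infty$ mixing of a reversible chain. Work in the space $L^2(\pi)$ with the orthonormal eigenbasis $1 = f_1, f_2, \ldots, f_n$ produced in the proof of Lemma~\ref{lem: expander mixing}, so that $P f_j = \lambda_j f_j$. The first step is to record the spectral expansion of the transition density: for all $x,y \in G$,
$$ \frac{P^t(x,y)}{\pi(y)} = \sum_{j=1}^n \lambda_j^t f_j(x) f_j(y) . $$
This follows by expanding the function $g_x := \delta_x/\pi(x)$ in the basis $(f_j)$, noting that $\iP{g_x, f_j} = f_j(x)$, and then computing $\iP{P^t g_x, g_y}$ two ways (the second way using reversibility, $\pi(y) P^t(y,x) = \pi(x) P^t(x,y)$). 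Since $f_1 \equiv 1$, the $j=1$ term equals $1$, so the proposition reduces to showing that $\bigl| \sum_{j \geq 2} \lambda_j^t f_j(x) f_j(y) \bigr| \leq \tfrac12$.

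For this, bound $\bigl| \sum_{j\geq 2} \lambda_j^t f_j(x) f_j(y) \bigr| \leq \lambda^t \sum_{j \geq 2} |f_j(x)|\,|f_j(y)|$, using $|\lambda_j| \leq \lambda$ for $j \geq 2$ and $t \geq 1$; then Cauchy--Schwarz gives
$$ \sum_{j\geq 2} |f_j(x)|\,|f_j(y)| \leq \Bigl( \sum_{j\geq 2} f_j(x)^2 \Bigr)^{1/2} \Bigl( \sum_{j\geq 2} f_j(y)^2 \Bigr)^{1/2} . $$
The one identity that needs a line of justification is the completeness relation $\sum_{j=1}^n f_j(x)^2 = 1/\pi(x)$, which one gets by evaluating $\norm{g_x}_{L^2(\pi)}^2$ in two ways. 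In particular $\sum_{j\geq 2} f_j(x)^2 \leq 1/\pi(x) \leq 1/\pmin$, so the remainder is at most $\lambda^t / \pmin$.

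Finally, since $P$ is irreducible and aperiodic we have $0 \leq \lambda < 1$, hence $\lambda^t \leq e^{-(1-\lambda)t}$ (from $\log\lambda \leq \lambda - 1$), and for $t \geq \frac{\log(2/\pmin)}{1-\lambda}$ this is at most $\pmin/2$. Therefore $\bigl| P^t(x,y)/\pi(y) - 1 \bigr| \leq \tfrac12$, which is exactly $\tfrac12 \pi(y) \leq \Pr_x[X_t = y] \leq \tfrac32 \pi(y)$. I do not expect a genuine obstacle here: the argument is entirely routine. The only two points that merit care are the completeness identity $\sum_j f_j(x)^2 = 1/\pi(x)$ and the elementary inequality $\lambda \leq e^{-(1-\lambda)}$, which is what converts the spectral-gap hypothesis into the stated threshold on $t$.
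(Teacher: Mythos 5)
Your proof is correct and follows essentially the same route as the paper: the paper simply cites the classical bound $|\Pr_x[X_t=y]-\pi(y)|\leq\sqrt{\pi(y)/\pi(x)}\,\lambda^t$ from Levin--Peres--Wilmer and then applies the same two estimates ($\sqrt{\pi(y)/\pi(x)}\leq\pi(y)/\pmin$ and $\lambda^t\leq e^{-(1-\lambda)t}$), whereas you derive that bound from scratch via the spectral expansion. No gaps; your argument is just the proof of the cited inequality written out.
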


\begin{proof}
It is classical, see \eg \cite[Chapter 12]{LevinPeresWilmer}, that
$$ | \Pr_x [ X_t=y ] - \pi(y) | \leq \sqrt{ \tfrac{\pi(y)}{\pi(x) } } \cdot \lambda^t \leq \pi(y) \cdot \frac1{\pmin } e^{- (1-\lambda) t } . $$
\end{proof}

For a Markov chain $P$ we denote $u = u(P) : = \min_{x \neq y} \Pr_x [ T_y < T_x^+ ] $.
If $(P_n)_n$ is a sequence of chains on $G_n$ such that $|G_n| \to \infty$ and $u(P_n)$ is 
bounded away from $0$, then we say that the chains $(P_n)_n$ are {\em uniformly transient}.
Lemma \ref{lem: uniform transience} shows that if the spectral gap and $\tfrac{\pmin}{\pmax}$
are bounded away from $0$, then we have uniform transience.
But uniform transience is a more general property:  indeed, simple random walk on 
$(\Z / n \Z)^d$ for $d \geq 3$ are uniformly transient, a fact arising from the fact that $\Z^d$ 
is transient for $d \geq 3$.

\begin{lem}
\label{lem: uniform transience}
There exists a universal constant $c>0$ such that
for every $x \neq y \in G$,
$$ \Pr_x [ T_y < T_x^+ ] \geq  \frac{c(1-\lambda) \pmin}{\pmax} . $$
That is, $u \geq  \frac{c(1-\lambda) \pmin}{\pmax}$.
\end{lem}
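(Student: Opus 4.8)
The plan is to reduce the escape‑probability bound to an estimate on expected hitting times, and then to read off the latter from the spectral decomposition of the fundamental matrix of $P$. Recall first the commute‑time identity
$$ \Pr_x[ T_y < T_x^+ ] = \frac{1}{\pi(x) \sr{ \E_x T_y + \E_y T_x } } , $$
valid for every reversible chain. One quick way to see it: take conductances $c(u,v) = \pi(u) P(u,v)$, so the total conductance is $\sum_u \pi(u) = 1$; then the escape probability equals $\bigl( \pi(x) R_{\mathrm{eff}}(x,y) \bigr)^{-1}$ and the commute time equals $R_{\mathrm{eff}}(x,y)$. Alternatively, decompose the trajectory started at $x$ into i.i.d.\ excursions between successive visits to $x$, use Kac's identity $\E_x T_x^+ = 1/\pi(x)$ for the mean excursion length, and track whether an excursion visits $y$. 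Granting the identity, it suffices to prove that for all $x \neq y$ both $\pi(x)\, \E_x T_y$ and $\pi(x)\, \E_y T_x$ are at most $\tfrac{2\pmax}{(1-\lambda)\pmin}$, since adding and inverting then gives the lemma with $c = \tfrac14$.

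For the hitting times I would use the fundamental matrix $Z = \sum_{t \geq 0} (P^t - \Pi)$, where $\Pi(u,v) = \pi(v)$; the series converges since $\|P^t - \Pi\|$ decays like $\lambda^t$. The classical identity $\E_x T_y = (Z_{yy} - Z_{xy})/\pi(y)$ reduces everything to estimating the entries of $Z$. In the $\pi$‑orthonormal eigenbasis $1 = f_1, f_2, \ldots, f_n$ of $P$ from the proof of Lemma \ref{lem: expander mixing} (with $1 = \lambda_1 > \lambda_2 \geq \cdots \geq \lambda_n > -1$), one has $P^t(u,v) - \pi(v) = \pi(v) \sum_{j \geq 2} \lambda_j^t f_j(u) f_j(v)$, hence $Z_{uv} = \pi(v) \sum_{j \geq 2} \tfrac{ f_j(u) f_j(v) }{ 1 - \lambda_j }$. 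Since $1 - \lambda_j \geq 1 - \lambda$ for $j \geq 2$ and, by Parseval applied to $\delta_w/\pi(w)$, $\sum_j f_j(w)^2 = 1/\pi(w)$, Cauchy--Schwarz gives $0 \leq Z_{ww} \leq \tfrac{1}{1-\lambda}$ and $|Z_{uv}| \leq \tfrac{1}{1-\lambda} \sqrt{ \pi(v)/\pi(u) }$ for $u \neq v$.

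Combining these,
$$ \pi(x)\, \E_x T_y = \frac{\pi(x)}{\pi(y)} \bigl( Z_{yy} - Z_{xy} \bigr) \leq \frac{1}{1-\lambda}\sr{ \frac{\pi(x)}{\pi(y)} + \sqrt{ \frac{\pi(x)}{\pi(y)} }\, } \leq \frac{2 \pmax}{(1-\lambda)\pmin} , $$
using $t + \sqrt{t} \leq 2\max(t,1) \leq 2\pmax/\pmin$ with $t = \pi(x)/\pi(y)$, and likewise $\pi(x)\, \E_y T_x = Z_{xx} - Z_{yx} \leq \tfrac{1}{1-\lambda}\bigl( 1 + \sqrt{\pi(x)/\pi(y)}\,\bigr) \leq \tfrac{2\pmax}{(1-\lambda)\pmin}$. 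Adding these two bounds and inverting yields $\Pr_x[T_y < T_x^+] \geq \tfrac{(1-\lambda)\pmin}{4\pmax}$, which is the claim.

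The one point requiring care is sharpness in $\pmin$. A cruder route — e.g.\ iterating Proposition \ref{prop: mix} to get $\Pr_x[T_y > k\tmix] \leq (1 - \tfrac12\pi(y))^k$, hence $\E_x T_y \lesssim \tmix/\pi(y)$, or averaging the stationary bound $\E_\pi T_y \leq \tfrac{1}{(1-\lambda)\pi(y)}$ over the worst starting vertex — costs an extra $\log(1/\pmin)$ or an extra power of $\pmin$. Retaining the off‑diagonal entry $Z_{xy}$, rather than only $Z_{yy}$, is exactly what encodes that the hitting time from the worst vertex exceeds the stationary one by no more than the factor $\sqrt{\pi(x)/\pi(y)}$, and this is what makes the clean bound $\tfrac{c(1-\lambda)\pmin}{\pmax}$ come out.
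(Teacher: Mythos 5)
Your proof is correct, and while it opens with the same commute-time identity $\Pr_x[T_y<T_x^+]=\bigl(\pi(x)(\E_xT_y+\E_yT_x)\bigr)^{-1}$ that the paper quotes from Aldous--Fill, the way you bound the hitting times is genuinely different. The paper conditions on the event $\{T_y^+\leq \tmix\}$, applies the Markov property at time $\tmix$ together with Proposition \ref{prop: mix}, and then uses the geometric domination of $T_y^+$ from stationarity coming from Lemma \ref{lem: expander mixing}; this yields $\E_x[T_y^+]\leq \tmix\vee\tfrac{3}{(1-\lambda)\pi(y)}$, and since $\log(2/\pmin)\leq 3/\pmin$ the maximum is at most $\tfrac{C}{(1-\lambda)\pmin}$, so the paper reaches the same sharp dependence on $\pmin$ without any fundamental-matrix machinery --- your closing worry that a mixing-time route necessarily loses a $\log(1/\pmin)$ factor only applies to the cruder iteration you describe, not to the argument the paper actually runs. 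Your route instead goes through the classical identity $\E_xT_y=(Z_{yy}-Z_{xy})/\pi(y)$ and the spectral representation $Z_{uv}=\pi(v)\sum_{j\geq 2}\tfrac{f_j(u)f_j(v)}{1-\lambda_j}$, with Parseval and Cauchy--Schwarz giving $Z_{ww}\leq\tfrac1{1-\lambda}$ and $|Z_{uv}|\leq\tfrac1{1-\lambda}\sqrt{\pi(v)/\pi(u)}$; all of these steps check out (the convergence of $Z$ and the bound $1-\lambda_j\geq 1-\lambda$ both use the paper's standing laziness assumption $\lambda_n>-1$). What your approach buys is a cleaner, purely algebraic derivation with an explicit constant ($c=\tfrac14$) and no reference to $\tmix$; what it costs is reliance on the fundamental-matrix hitting-time formula, an imported classical fact on par with the commute-time identity itself, whereas the paper's estimate is assembled entirely from lemmas it proves.
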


\begin{proof}
The identity
$$ \Pr_x [ T_y < T_x^+ ] = \frac{1}{\pi(x) ( \E_x [ T_y^+ ] + \E_y [ T_x^+ ]) }  $$
is well known, quite simple to prove, and appears \eg in \cite[Chapter 2]{AldousFill}.

Proposition \ref{prop: mix} and the Markov property at time $\tmix$ give us that
$$  \E_x [ T_y^+]
\leq \Pr_x [ T_y^+ \leq \tmix ] \cdot \tmix + \Pr_x [ T_y^+ > \tmix ] \cdot \tfrac32 \cdot \E_\pi [ T_y ] . $$
Starting from the stationary distribution, Lemma \ref{lem: expander mixing} tells us that
$T_y^+$ is dominated by a geometric random variable of mean $\frac{2}{(1-\lambda) \pi(y) }$.
Thus, $\E_x[T_y^+] \leq \tmix \vee \frac{3}{(1-\lambda) \pi(y) }$ and similarly for $\E_y[T_x^+]$.
Altogether,
$$ \Pr_x [ T_y < T_x^+ ] \geq  \frac{c(1-\lambda) \pmin}{\pmax}  . $$
\end{proof}

\section{Harmonic measure from a uniform starting point}

Let $S \subset G$ be some set.
Let
$$ h_{y,S}(x)  : = \Pr_y [ X_{T_S} = x ]  \AND h_S(x) = \sum_y \pi(y) h_{y,S}(x) $$
be the harmonic measure on $S$ from $y$, and from the stationary distribution, respectively.
A simple, but crucial, observation is the following.

\begin{prop}
For any $x \in S , y \in G$,
\begin{align}
\label{eqn: reverse paths}
\pi(y) h_{y,S}(x) &  = \frac{\pi(x) \Pr_x [ T_y < T_S^+ ] }{ \Pr_y [ T_S < T_y^+ ] } .
\end{align}
\end{prop}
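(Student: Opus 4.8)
The plan is to combine a path-reversal argument, using reversibility of $P$ with respect to $\pi$, with a standard renewal (Green's function) computation. First I would dispose of the degenerate case $y\in S$ directly: then $T_S=0$ under $\Pr_y$, so $h_{y,S}(x)=\1{x=y}$, and since $T_S^+\ge 1$ one checks \eqref{eqn: reverse paths} by hand — when $x=y$ both sides equal $\pi(y)$, and when $x\ne y$ the right-hand numerator $\Pr_x[T_y<T_S^+]$ vanishes because reaching $y\in S$ is itself a return to $S$ (so $T_y\ge T_S^+$). So from now on assume $y\notin S$, hence $x\ne y$.

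Next, I would expand the harmonic measure over finite paths:
$$ h_{y,S}(x)=\sum_{t\ge 1}\ \sum_{\gamma}\ \prod_{i=0}^{t-1}P(\gamma_i,\gamma_{i+1}), $$
where the inner sum is over $\gamma=(\gamma_0,\dots,\gamma_t)$ with $\gamma_0=y$, $\gamma_t=x$ and $\gamma_i\notin S$ for $0\le i\le t-1$; these enumerate the event $\set{T_S=t,\ X_{T_S}=x}$ (nothing forbids $\gamma$ from revisiting $y$). Telescoping the reversibility relation $\pi(a)P(a,b)=\pi(b)P(b,a)$ along $\gamma$ gives $\pi(y)\prod_{i=0}^{t-1}P(\gamma_i,\gamma_{i+1})=\pi(x)\prod_{i=0}^{t-1}P(\gamma_{i+1},\gamma_i)$, i.e. $\pi(y)$ times the weight of $\gamma$ equals $\pi(x)$ times the weight of the reversed path. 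The reversed paths are exactly the $\eta=(\eta_0,\dots,\eta_t)$ with $\eta_0=x$, $\eta_t=y$ and $\eta_i\notin S$ for $1\le i\le t$, i.e. the trajectories realizing $\set{X_t=y,\ t<T_S^+}$. Summing over $t\ge 1$ and using $X_0=x\ne y$,
$$ \pi(y)\,h_{y,S}(x)=\pi(x)\sum_{t\ge 1}\Pr_x[X_t=y,\ t<T_S^+]=\pi(x)\,\E_x\Big[\,\#\set{0\le t<T_S^+:\ X_t=y}\,\Big]. $$

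Finally, I would evaluate $G:=\E_x[\,\#\set{0\le t<T_S^+:X_t=y}\,]$ by the strong Markov property at $T_y$. If $T_y\ge T_S^+$ the count is $0$; otherwise, at time $T_y$ the walk sits at $y$ without having entered $S$, so the number of visits to $y$ on $[T_y,T_S^+)$ has the law of the number of visits to $y$ before $T_S^+=T_S$ started from $y$. Started from $y\notin S$ these visits form a geometric sequence terminated at the first excursion from $y$ that hits $S$ before returning to $y$, an event of probability $\Pr_y[T_S<T_y^+]$ (note $T_S\ne T_y^+$), so the expected number of visits is $1/\Pr_y[T_S<T_y^+]$; hence $G=\Pr_x[T_y<T_S^+]/\Pr_y[T_S<T_y^+]$, the denominator being positive by irreducibility. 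This gives \eqref{eqn: reverse paths}. The one point requiring care is the bookkeeping in the path expansion: the paths $\gamma$ may revisit $y$ (and $x$), so the reversed sum is a Green's-function quantity — an expected number of visits — rather than a hitting probability, which is precisely why one passes through the renewal identity of the last step instead of matching hitting events directly.
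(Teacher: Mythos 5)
Your proof is correct and takes essentially the same route as the paper: reversibility-based path reversal combined with a geometric renewal at $y$ whose success probability is $\Pr_y [ T_S < T_y^+ ]$. The only difference is bookkeeping — you reverse all paths at once to obtain the Green's function $\E_x \big[ \# \set{ 0 \leq t < T_S^+ : X_t = y } \big]$ and perform the renewal on the reversed side, whereas the paper reverses only the paths that avoid returning to $y$ and performs the renewal on $h_{y,S}(x)$ directly; the two computations are equivalent.
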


\begin{proof}
This is a well known application of path-reversal.
Since the Markov chain is reversible with reversing measure $\pi$,
we have that $\pi(z) P(z,w) = \pi(w) P(w,z)$ for all $w,z \in G$, which leads to
$$ \pi(x_0) \Pr [ X_0=x_0 , \ldots, X_n = x_n ] = \pi(x_n) \Pr [ X_0 = x_n , \ldots, X_n = x_0 ] , $$
for any path $x_0,\ldots,x_n$.

Fix $x \in S \subset G$ and $y \in G \setminus S$.

Let $\Gamma_{y,x,S}$ be all paths 
$\gamma = (\gamma(0),\ldots,\gamma(n))$ in $G$ such that $\gamma(0) = y, \gamma(n)=x$
and $\set{ \gamma(1) , \ldots, \gamma(n-1) } \cap S = \emptyset$;
these are paths that go from $y$ to $x$ never returning to $y$ and hitting $S$ for the first time at $x$.
For a path $\gamma$ let $\Pr[\gamma] = \Pr [ X_0 = \gamma(0) , \ldots, X_{|\gamma|} = \gamma(|\gamma|) ]$.

Summing $\Pr[\gamma]$ over all paths $\gamma$ in $\Gamma_{y,x,S}$ one obtains $\Pr_y [ X_{T_S}=x , T_S < T_y^+ ]$.
Summing $\Pr[\hat \gamma]$ over the reversals $\hat \gamma$ of all paths $\gamma$ in $\Gamma_{y,x,S}$ we get 
$\Pr_x [ T_y < T_S^+ ]$.
Thus, multiplying by $\pi(y)$ or $\pi(x)$ we obtain
$$ \pi(y) \Pr_y [  X_{T_S}=x , T_S < T_y^+ ] = \pi(x) \Pr_x [ T_y < T_S^+ ] . $$

Let $A_k$ be the event that the Markov chain visits $y$ exactly $k$ times up to hitting $S$ (for any integer $k \geq 0$).  Then by the strong Markov property,
\begin{align*}
h_{y,S}(x) & = \Pr_y [ X_{T_S} = x ] = \sum_{k=1}^{\infty} \Pr_y [ X_{T_S} = x , A_k ] \\
& = \Pr_y [ X_{T_S}=x , T_S < T_y^+ ] + \sum_{k=2}^{\infty} \Pr_y [ T_y^+ < T_S ] \cdot \Pr_y [ X_{T_S} = x , A_{k-1} ] \\
& = \Pr_y [ X_{T_S} = x , T_S < T_y^+ ] + \Pr_y [ T_y^+ < T_S ] \cdot h_{y,S} (x) .
\end{align*}
So
$$ h_{y,S}(x) = \frac{\Pr_y [ X_{T_S} = x , T_S < T_y^+] }{ \Pr_y [ T_S < T_y^+ ] }   . $$
This proves the proposition for the case that $y \not\in S$.

Now, if $y \in S$, then $h_{y,S}(x) = \1{x=y}$.
Also, if $y \neq x$, then under $\Pr_x$ we have that $T_y = T_y^+ \geq T_S^+$ and if $y=x$
then under $\Pr_x$ we have $T_y=0 < T_S^+$.
So $\Pr_x [ T_y < T_S^+ ] = \1{y=x}$ and $\Pr_y [ T_y < T_S^+ ]  =1$ for $y \in S$
\end{proof}

\begin{prop}
\label{prop: har bound}
For any $x \in S \subset G$,
$$ 
h_S(x) \leq 
u(P)^{-1} \pi(x) \E_x[T_S^+ ] .  $$
\end{prop}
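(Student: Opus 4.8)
The plan is to start from the path-reversal identity \eqref{eqn: reverse paths}, which expresses each $\pi(y)h_{y,S}(x)$ as $\pi(x)\Pr_x[T_y<T_S^+]/\Pr_y[T_S<T_y^+]$, and then sum over all $y\in G$ to obtain $h_S(x)=\sum_y \pi(y)h_{y,S}(x)$. The point is that the numerator, once summed over $y$, has a clean probabilistic meaning: $\sum_y \Pr_x[T_y < T_S^+]$ counts the expected number of distinct sites visited by the walk started at $x$ before it returns to $S$, which is at most the expected length of that excursion, namely $\E_x[T_S^+]$.

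First I would deal with the denominator. For $y\notin S$ we have $\Pr_y[T_S<T_y^+]\ge \Pr_y[T_w<T_y^+]$ for any fixed $w\in S$, and more crudely $\Pr_y[T_S<T_y^+]\ge \min_{z\ne w}\Pr_z[T_w<T_z^+]\ge u(P)$ by the very definition of $u=u(P)=\min_{x\ne y}\Pr_x[T_y<T_x^+]$; for $y\in S$ the identity in the proposition gives the denominator equal to $1\ge u(P)$. So in all cases $\Pr_y[T_S<T_y^+]\ge u(P)$, hence
\[
h_S(x) = \sum_y \pi(y) h_{y,S}(x) = \pi(x)\sum_y \frac{\Pr_x[T_y<T_S^+]}{\Pr_y[T_S<T_y^+]} \le u(P)^{-1}\,\pi(x)\sum_y \Pr_x[T_y<T_S^+].
\]

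Next I would bound $\sum_y \Pr_x[T_y<T_S^+]$. For a fixed $y$, the event $\{T_y<T_S^+\}$ (under $\Pr_x$, with $x\in S$, so $T_S^+\ge 1$) says the walk visits $y$ at some time strictly before it returns to $S$; thus $\Pr_x[T_y<T_S^+] = \Pr_x[y \in \{X_0,X_1,\dots,X_{T_S^+-1}\}] = \E_x[\mathbf{1}\{y \text{ visited before } T_S^+\}]$. Summing over $y$ and exchanging sum and expectation gives $\sum_y \Pr_x[T_y<T_S^+] = \E_x[\,\#\{\text{distinct sites visited in } [0,T_S^+-1]\}\,] \le \E_x[T_S^+]$, since the number of distinct sites visited in a time interval of length $T_S^+$ is at most $T_S^+$. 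Combining with the previous display yields exactly $h_S(x)\le u(P)^{-1}\pi(x)\E_x[T_S^+]$.

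The only genuinely delicate point is the book-keeping at $y=x$ and $y\in S$ in the reversal identity: when $y=x$ one has $\Pr_x[T_y<T_S^+]=\Pr_x[0<T_S^+]=1$ (as $x\in S$ so $T_S^+\ge 1$), matching $\pi(x)h_{x,S}(x)=\pi(x)$, and for $y\in S\setminus\{x\}$ the numerator $\Pr_x[T_y<T_S^+]$ is $0$, consistent with $h_{y,S}(x)=0$. This is already spelled out at the end of the previous proposition, so it is not really an obstacle here; otherwise the argument is a short unwinding of definitions plus the trivial distinct-sites-$\le$-time-elapsed bound, and I do not anticipate any real difficulty.
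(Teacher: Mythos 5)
Your argument is correct and is essentially the paper's proof: sum the path-reversal identity \eqref{eqn: reverse paths} over $y$, bound each denominator below by $u(P)$ via the definition of $u$, and bound $\sum_y \Pr_x[T_y<T_S^+]=\E_x[|X[0,T_S^+-1]|]$ by $\E_x[T_S^+]$ exactly as in \eqref{eqn: sum path reversal}. The paper merely states these steps more tersely; your handling of the $y\in S$ cases fills in details the paper leaves implicit.
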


\begin{proof}
We begin with the fact that
\begin{align} \label{eqn: sum path reversal} 
\sum_y \Pr_x [ T_y < T_S^+ ] & = \sum_y \Pr_x [ y \in X [0, T_S^+ -1 ] ]
= \E_x [ | X[0, T_S^+ - 1] | ] \leq \E_x [ T_S^+ ] ,
\end{align}
since $|X[0,T_S^+ -1] | \leq T_S^+$.
The proof is completed using \eqref{eqn: reverse paths}.
\end{proof}

\begin{thm}
\label{thm: main}
There exist constants $C,C'>0$ such that
for any $x \in S \subset G$,
\begin{align*}
h_S(x) & \leq
\frac{C }{u(P) (1-\lambda)} \cdot \pi(x) \cdot \sr{ \log (2e / \pmin) \vee \pi(S)^{-1} } \\
& \leq \frac{C' \pmax}{(1-\lambda)^2 \pmin} \cdot \pi(x) \cdot \sr{ \log (2e / \pmin) \vee \pi(S)^{-1} } .
\end{align*}

Specifically, if we consider simple random walk on a regular graph,
\begin{align*}
h_S(x) & \leq
\frac{C }{u(P) (1-\lambda)} \cdot \sr{ \frac{1}{|S|} \vee \frac{\log N}{N} } 
\leq \frac{C'}{(1-\lambda)^2} \cdot \sr{ \frac{1}{|S|} \vee \frac{\log N}{N} } .
\end{align*}
\end{thm}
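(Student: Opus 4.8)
The plan is to feed a return-time estimate into Proposition~\ref{prop: har bound}. By that proposition it suffices to prove that for every $x\in S$,
$$ \E_x[T_S^+]\ \leq\ \frac{4}{1-\lambda}\,\sr{\log(2e/\pmin)\vee\pi(S)^{-1}} ; $$
plugging this into $h_S(x)\leq u(P)^{-1}\pi(x)\E_x[T_S^+]$ gives the first inequality with $C=4$, and then substituting the bound $u(P)\geq c(1-\lambda)\pmin/\pmax$ from Lemma~\ref{lem: uniform transience} gives the second with $C'=4/c$.

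To estimate $\E_x[T_S^+]$ I would proceed in two steps. First, control the hitting time from stationarity: since $T_S\leq T_S^+$ pointwise and Lemma~\ref{lem: expander mixing} dominates $T_S^+$ under $\Pr_\pi$ by a geometric variable with success probability $1-\sqrt{1-(1-\lambda)\pi(S)}\geq\tfrac12(1-\lambda)\pi(S)$ (using $1-\sqrt{1-p}\geq p/2$), summing the tail estimate yields $\E_\pi[T_S]\leq\E_\pi[T_S^+]\leq\frac{2}{(1-\lambda)\pi(S)}$. Second, pass from a general start to stationarity via the Markov property at time $\tmix$: on the event $\set{T_S^+>\tmix}$ the walk has not hit $S$ by time $\tmix$, so $T_S^+=\tmix+T_S\circ\theta_{\tmix}$ there, whence $\E_x[T_S^+]\leq\tmix+\E_x\br{\ind{T_S^+>\tmix}\,\E_{X_{\tmix}}[T_S]}$. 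Since $\tmix\geq\log(2/\pmin)/(1-\lambda)$, Proposition~\ref{prop: mix} gives $\Pr_x[X_{\tmix}=z]\leq\tfrac32\pi(z)$, so the last expectation is at most $\tfrac32\sum_z\pi(z)\E_z[T_S]=\tfrac32\E_\pi[T_S]$, and therefore $\E_x[T_S^+]\leq\tmix+\frac{3}{(1-\lambda)\pi(S)}$.

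It remains to assemble the constants. Using $\lceil a\rceil\leq a+1$ and $1-\lambda\leq1$ we get $\tmix\leq\frac{\log(2/\pmin)+1}{1-\lambda}=\frac{\log(2e/\pmin)}{1-\lambda}$, so $\E_x[T_S^+]\leq\frac1{1-\lambda}\sr{\log(2e/\pmin)+\tfrac{3}{\pi(S)}}\leq\frac{4}{1-\lambda}\sr{\log(2e/\pmin)\vee\pi(S)^{-1}}$, as claimed. For the specialization to simple random walk on a regular graph with $N$ vertices, $\pi$ is uniform, so $\pi(x)=\pmin=\pmax=\frac1N$; then $\pi(x)\log(2e/\pmin)=\frac1N\log(2eN)\asymp\frac{\log N}{N}$, while $\pi(x)\pi(S)^{-1}=\frac1{|S|}$, and Lemma~\ref{lem: uniform transience} gives $u(P)^{-1}\leq(c(1-\lambda))^{-1}$; these substitutions yield the two displayed bounds.

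I do not expect a serious obstacle: every ingredient (Propositions~\ref{prop: har bound} and~\ref{prop: mix}, Lemmas~\ref{lem: expander mixing} and~\ref{lem: uniform transience}) is already available. The only places demanding care are the bookkeeping in the Markov-property step --- identifying $\set{T_S^+>\tmix}$ as a legitimate $\sigma(X_0,\dots,X_{\tmix})$-measurable event on which the hitting time restarts cleanly, and remembering that the quantity being controlled is the \emph{return} time $T_S^+$ with $x\in S$ --- together with the elementary inequality $1-\sqrt{1-p}\geq p/2$ behind the geometric domination, and keeping all multiplicative constants explicit so they can be collapsed into the final $C$ and $C'$.
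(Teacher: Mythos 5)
Your proposal is correct and follows essentially the same route as the paper: bound $\E_x[T_S^+]$ by conditioning at time $\tmix$ via Proposition~\ref{prop: mix}, dominate $T_S^+$ from stationarity by a geometric variable using Lemma~\ref{lem: expander mixing}, and feed the result into Proposition~\ref{prop: har bound} and Lemma~\ref{lem: uniform transience}. The only (cosmetic) difference is that you keep the sum $\tmix+\tfrac{3}{(1-\lambda)\pi(S)}$ and absorb it into $4\max$, where the paper states the bound directly as a maximum; your bookkeeping is if anything slightly more careful.
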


\begin{proof}
By Proposition \ref{prop: mix} with the Markov property at time $\tmix$,
$$ \E_x [ T_S^+] \leq \tmix \cdot \Pr_x [ T_S^+ \leq \tmix ] + \Pr_x [ T_S^+ > \tmix ] \cdot \frac{3}{2} \cdot \E_\pi [ T_S^+] . $$
Lemma \ref{lem: expander mixing} implies that starting from the stationary distribution $T_S^+$ is
dominated by a geometric random variable of mean $\frac{2}{\pi(S) (1-\lambda)}$.
So,
$$ \E_x [T_S^+ ] \leq \tmix \vee \frac{3}{\pi(S) (1-\lambda)} \leq
\frac{3}{1-\lambda} \cdot \sr{ \log (2e / \pmin) \vee \frac{1}{\pi(S)} }   . $$
Thus,  the upper bound in  Proposition \ref{prop: har bound} completes the proof of the first inequality.
The second inequality comes from plugging in the lower bound on $u=u(P)$ in Lemma \ref{lem: uniform transience}.
\end{proof}

Theorem \ref{thm: main} and Proposition \ref{prop: har bound} are tight up to constants as the following example shows.

\begin{exm}
\label{exm: tree expander}
Let $G$ be the graph obtained by taking a depth $k$ binary tree and connecting the $2^k$ leaves with extra edges coming from
a $3$-regular graph on $2^k$ vertices with spectral gap $1-\lambda$.
The Markov chain we consider is the simple random walk on $G$.

It is simple to verify that the spectral gap of this walk is just a function of
$1-\lambda$ above, and specifically is bounded away from $0$ independently of $k$
(one way is to verify that the linear isoperimetric inequality holds, and use Cheeger's inequality).
The maximal degree is $4$ and minimal degree is $2$, so
$\pmax = 2 \pmin$.  Now consider the set $S$ consisting of all the leaves of the original tree and the root of the tree.
Let $x$ be the root of the tree.
A random walk starting at $x$ will hit the leaves of the tree before returning to $x$ with probability at least 
the escape probability in the infinite-depth binary tree.
Since the distance from $x$ to the leaves is $k$, we have that $\Pr_x [ |X[0,T_S^+-1] | \geq k ] \geq \alpha > 0$
for some $\alpha$ independent of $k$.  Thus, $\E_x [ |X[0,T_S^+-1] | ] \geq \alpha k$. 
Using the equalities in \eqref{eqn: sum path reversal} to sum \eqref{eqn: reverse paths} we have that
$$ h_S(x) \geq \pi(x) \E_x[ |X[0, T_S^+-1] | ] \geq \frac{c k }{|S|} , $$
for some constant $c>0$ independent of $k$.
This lower bound matches the upper bound in Theorem \ref{thm: main} up to constants
(as mentioned above, $\frac{1}{1-\lambda} = O(1)$).

(One may wish to restrict to connected subsets of $G$, but this is similarly analyzed,
since we could have chosen $S$ to be the
leaves together with a simple path from $x$
to the leaves, and the analysis would still be the same -
it only depended on the fact that with probability bounded away from $0$ a random walk
starting at $x$ will reach the leaves before returning to $S$.)

Thus, the $\log |G|$ (or, rather, $\log (2e/\pmin)$) term in 
Theorem \ref{thm: main} cannot be removed in the general case.
\end{exm}

\begin{rem}
Another question that arises when considering Theorem \ref{thm: main}, is whether a similar result 
could hold for the harmonic measure starting from a fixed typical point, not just from the stationary distribution.
However, this does not hold. To see this, consider simple random walk on a transitive $d$-regular graph $G$.

Let $0 < r < \mathrm{diam}(G)$.  Suppose that $S$ is a set such that $G$ is contained in the $r$-neighborhood of $S$;
\ie $G = \set{ y \ : \ \dist(y,S) \leq r }$.
Then, for any $y \in G$ there exists $x \in S$ such that $\dist(y,S) = \dist(y,x) \leq r$. 
Thus, $h_{y,S}(x) \geq d^{-r}$.

Now, if $d^{-r} > \frac{1}{\eps |S|}$ for some small $\eps>0$, 
we have that for every $y \in G$ there exists $x \in S$ with harmonic 
measure significantly larger than $|S|^{-1}$.

Let $b$ be the size of the ball of radius $r/2$ in $G$.  We may choose a collection of disjoint balls
of radius $r/2$, $B_1,\ldots B_k$ so that for $B = \bigcup_{j=1}^k B_j$ we have 
$G = \set {y \  : \ \dist(y,B) \leq r/2}$.  Since these balls are disjoint we have that $k \leq \frac{|G|}{b}$.
Also, if $y \in G$ then there exists $1 \leq j \leq k$ such that $\dist(y,B_j) \leq r/2$.
Thus, if $S = \set{ x_1,\ldots,x_k}$ where $x_j$ is the centre of the ball $B_j$, then $|S| \leq \frac{|G|}{b}$
and $G = \set{ y \ : \ \dist(y,S) \leq r }$.

Thus, if $|G| > \tfrac1{\eps} d^{3r/2}$ then we may choose $|S| \leq \frac{|G|}{b}$ but also
such that for every $y \in G$ there exists $x \in S$ with $h_{y,S}(x) > \frac{1}{\eps |S|}$.

So for any $\eps > 0$, there are many graphs $|G|$ such that we may find $S \subset G$
so that for any $y \in G$ there exists $x \in S$ with $h_{y,S} (x) > \tfrac{1}{\eps |S|}$.

That is, the use of the stationary measure as the starting measure in Theorem \ref{thm: main} is crucial.
\end{rem}

\section{No small support for expanders}

\subsection{Support of harmonic measure}
A theorem of Makarov \cite{Makarov} states that the harmonic measure of a simply connected 
domain in the plane is supported
on small subsets (in fact sets of dimension $1$, see \cite{JW88}).
Lawler has shown the analogous result for random walk in discrete space \cite{Law93}.
Theorem \ref{thm: main} above tells us that large sets $S$ may have points
that attract a lot of harmonic measure, perhaps up to a logarithmic factor more.
One may {\em a-priori} think that perhaps there are enough such points so that
the harmonic measure will be supported on a very small subset of $S$, similarly to the case
of Makarov's Theorem in the plane.
However, the following theorem shows that the harmonic measure cannot
be supported on small subsets of $S$.

\begin{thm}
\label{thm: no Makarov}
There exists a constant $C>0$ such that for all small $\eps>0$ the following holds.
Suppose that $A \subset S \subset G$ such that $\pi(A) \leq \eps \pi(S)$.
Then,
$$ h_S(A) = \sum_{x \in A} h_S(x) \leq C \frac{\eps \log \tfrac{1}{\eps} }{1-\lambda}  . $$
Specifically, in the case of simple random walk on a regular graph $G$,
if $A \subset S \subset G$ is such that $|A| \leq \eps |S|$ then the above bound on $h_S(A)$ holds.
\end{thm}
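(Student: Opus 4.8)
The plan is to bound $h_S(A)$ by summing the pointwise estimate from Theorem~\ref{thm: main} over $x \in A$, but this naive approach only gives $h_S(A) \leq \frac{C}{1-\lambda}\cdot \frac{\pi(A)}{\pi(S)}\cdot(\log(2e/\pmin)\vee \pi(S)^{-1})\cdot\pi(S)$-type bounds which are too weak (they involve $\pmin$, not just $\eps$). So instead I would revisit the identity \eqref{eqn: reverse paths} and \eqref{eqn: sum path reversal} and work with the quantity $\E_x[T_S^+]$ more carefully: summing \eqref{eqn: reverse paths} over $x \in A$ and $y \in G$, we get
\begin{align*}
h_S(A) = \sum_{x\in A}\sum_y \pi(y)h_{y,S}(x) \leq u(P)^{-1}\sum_{x\in A}\pi(x)\sum_y \Pr_x[T_y < T_S^+] = u(P)^{-1}\sum_{x\in A}\pi(x)\,\E_x[|X[0,T_S^+-1]|].
\end{align*}
The key realization is that $|X[0,T_S^+-1]|$, the number of distinct vertices visited before returning to $S$, should on a short time scale be controlled, and on longer time scales the walk has mixed so it behaves like the stationary walk. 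The improvement over Theorem~\ref{thm: main} comes from not using $\E_x[T_S^+]$ but rather the expected \emph{range}, and from summing over $A$ where we can exploit $\pi(A)\leq\eps\pi(S)$.

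Concretely, I would split the range at the mixing time $\tmix$. For the portion of the path up to time $\tmix$, the number of vertices visited is at most $\tmix = O(\frac{\log(2/\pmin)}{1-\lambda})$, but more usefully, the \emph{expected} number of those vertices that lie in $A$-reachable configurations... actually the cleaner route: write $\E_x[|X[0,T_S^+-1]|] \leq \E_x[T_S^+\wedge \tmix] + \E_x[(|X[\tmix, T_S^+-1]|)\mathbf{1}_{T_S^+>\tmix}]$. By Lemma~\ref{lem: expander mixing}, $\Pr_x[T_S^+ > \tmix]$ is small-ish, and after time $\tmix$ the walk is within a constant factor of stationary (Proposition~\ref{prop: mix}), so the tail contributes $O(\pi(S)^{-1}(1-\lambda)^{-1})\Pr_x[T_S^+>\tmix]$. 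The first term $\E_x[T_S^+\wedge\tmix]$ is at most $\tmix$, but when we multiply by $\pi(x)$ and sum over $x\in A$ we only get $\pi(A)\tmix$, which still has a $\log(1/\pmin)$.

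To eliminate the $\pmin$ in favor of $\log(1/\eps)$, the trick must be to choose the truncation time adaptively as a function of $\eps$ and $\pi(S)$, not $\pmin$: take $t_\eps := \lceil \frac{\log(1/(\eps\pi(S)))}{1-\lambda}\rceil$ or similar. Then I would argue: $\sum_{x\in A}\pi(x)\E_x[|X[0,t_\eps]|] \leq t_\eps \pi(A) \leq t_\eps\,\eps\,\pi(S)$; and the number of \emph{new} vertices visited after $t_\eps$ and before $T_S^+$, weighted appropriately — here one uses that for $t\geq t_\eps$ the walk from $x$ has distribution comparable to $\pi$ conditioned away from $S$, so $\E_x[|X[t_\eps,T_S^+-1]|\mathbf 1_{T_S^+>t_\eps}]$ is comparable to $\E_\pi[T_S^+]\leq \frac{3}{\pi(S)(1-\lambda)}$ times the probability $\Pr_x[T_S^+>t_\eps]$, which by Lemma~\ref{lem: expander mixing} is at most $(1-(1-\lambda)\pi(S))^{t_\eps/2}\leq (\eps\pi(S))^{1/2}$ roughly — not quite small enough on its own, so one instead keeps this term as $\frac{C}{\pi(S)(1-\lambda)}\Pr_x[T_S^+>t_\eps]$ and sums $\pi(x)\Pr_x[T_S^+>t_\eps]$ over $x\in A$ using a reversal/stationarity argument to get $\leq \pi(S)^{-1}(1-\lambda)^{-1}\cdot\pi(A) \leq \eps(1-\lambda)^{-1}$. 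Combined, $h_S(A) \leq u(P)^{-1}[\,t_\eps\eps\pi(S) + C\eps/(1-\lambda)\,]$, and since $u(P)^{-1}\pi(S)$ needs to be $O(1)$...

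Here is where I expect the main obstacle: getting the bound to come out as $C\eps\log(1/\eps)/(1-\lambda)$ with no dependence on $u(P)$, $\pmin$, or $\pmax$ at all. This suggests the argument cannot go purely through Proposition~\ref{prop: har bound} (which loses a factor $u(P)^{-1}$), but must instead directly estimate $h_S(A) = \Pr_\pi[X_{T_S}\in A]$, perhaps by a renewal/excursion decomposition: decompose the stationary walk into excursions between visits to $S$, note that by Lemma~\ref{lem: expander mixing} the typical excursion has length $O(1/(\pi(S)(1-\lambda)))$, and the fraction of excursions that are "long enough to reach deep into $A$-attracting territory" is what's controlled by $\eps\log(1/\eps)$. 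The cleanest version: bound $h_S(A)$ by conditioning on the last visit to $S\setminus A$ before hitting $A$, or by a direct computation that $h_S(A) = \E_\pi[\text{number of visits to } A \text{ in an excursion, appropriately normalized}]$, reducing everything to comparing $\sum_{x\in A}\pi(x)$-weighted hitting quantities against $\pi(S)$-weighted ones with the logarithmic loss coming from a dyadic decomposition over the excursion length scale $k$, $1\leq 2^k \leq 1/(\eps\pi(S)(1-\lambda))$, which has $O(\log(1/\eps))$ scales each contributing $O(\eps/(1-\lambda))$. I would structure the final proof around this dyadic-in-time decomposition of the excursion from $S$, which is the device that naturally produces exactly one logarithmic factor in $1/\eps$.
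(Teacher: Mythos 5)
Your instinct to abandon the path-reversal route (Proposition \ref{prop: har bound}, with its unavoidable $u(P)^{-1}$ loss) and instead estimate $h_S(A)=\Pr_\pi[X_{T_S}\in A]$ directly from the stationary start is exactly right, and you have correctly identified the two relevant ingredients: stationarity for short times and Lemma \ref{lem: expander mixing} for long times. But the proposal never assembles these into an argument --- it ends with three alternative vague strategies (last visit to $S\setminus A$, excursion counting, dyadic time scales) without executing any of them, and you explicitly flag the closing step as an unresolved ``obstacle.'' The concrete device you are missing is very simple: write $B=S\setminus A$, observe that $h_S(A)=\Pr_\pi[T_A<T_B]$, and use the inclusion $\{T_A<T_B\}\subset\{T_A<M\}\cup\{T_B>M\}$ for a single cutoff time $M$. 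Then $\Pr_\pi[T_A<M]\le\sum_{t=0}^{M-1}\Pr_\pi[X_t\in A]=M\,\pi(A)$ by stationarity, while $\Pr_\pi[T_B>M]\le\exp\bigl(-\tfrac{1-\lambda}{2}\pi(B)M\bigr)$ by Lemma \ref{lem: expander mixing} applied to $B$ (not to $S$ --- you need the walk to have actually hit $B$ for the event to fail). Choosing $M\approx\frac{2}{(1-\lambda)\pi(B)}\log\frac{\pi(B)}{\pi(A)}$ balances the two terms and yields $C\,\eps\log(1/\eps)/(1-\lambda)$. This is the paper's proof, and it is a ``one-scale'' version of your dyadic idea.

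Two further corrections to your sketch. First, your bookkeeping for the dyadic decomposition is wrong: the number of dyadic scales up to $1/(\eps\,\pi(S)(1-\lambda))$ is $\log\frac{1}{\eps}+\log\frac{1}{\pi(S)}+\log\frac{1}{1-\lambda}$, so ``$O(\log(1/\eps))$ scales each contributing $O(\eps/(1-\lambda))$'' would reintroduce exactly the $\pmin$-type dependence you are trying to avoid. What actually happens is that the scale-$k$ contribution is at most $\min\bigl(2^{k+1}\pi(A),\,e^{-c(1-\lambda)\pi(B)2^{k}}\bigr)$, a geometric series dominated by its top term at $2^{k}\approx\frac{\log(1/\eps)}{(1-\lambda)\pi(B)}$; the single logarithm comes from the value of the optimal cutoff, not from counting scales. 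Second, your intermediate step $\sum_{x\in A}\pi(x)\E_x[|X[0,t_\eps]|]\le t_\eps\pi(A)$ lives inside the reversed-path framework of \eqref{eqn: reverse paths} and so cannot shed the $u(P)^{-1}$ factor, as you yourself half-observe; the direct argument above needs no reversal at all.
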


\begin{proof}
Write $B = S \setminus A$.
Let $K>0$ be some constant and let $M = \big\lceil \frac{K}{\pi(B)} \big\rceil$.
The event $T_A < T_B$ implies that
either $T_A < M$ or $T_B > M$.
The probability of the former is bounded by
\begin{align*}
\Pr_{\pi}  \big[ T_A < M \big] & \leq \sum_{t=0}^{ M- 1 } \Pr_{\pi}  [ X_t \in A ] =
M \cdot \pi(A) \leq K \cdot \frac{\pi(A)}{\pi(B) } + \pi(A) .
\end{align*}
By Lemma \ref{lem: expander mixing},
\begin{align*}
\Pr_{\pi}  \big[ T_B > M \big] & \leq \exp \sr{ - \tfrac{1-\lambda}{2} \pi(B) \cdot M }
\leq \exp \sr{ - \tfrac{1-\lambda}{2} K } .
\end{align*}
If we take $K = \tfrac{2}{1-\lambda} \log \sr{ \tfrac{ \pi(B) }{\pi(A) } }$
we obtain
$$ \Pr_{\pi}  [ T_A < T_B ] \leq \frac{2}{1- \lambda} \cdot \frac{\pi(A) }{ \pi(B) } \cdot
\log \sr{ \tfrac{\pi(B) }{ \pi(A) } } + \pi(A) + \frac{\pi(A) }{\pi(B) } . $$
Thus, if $\pi(A) \leq \eps \pi(S)$ then
$$ \Pr_{\pi}  [ T_A < T_B ] \leq C \cdot \tfrac{1}{1-\lambda} \cdot \eps \cdot \log \tfrac1{\eps} , $$
for some constant $C>0$.
\end{proof}

\begin{rem}
Note that Theorem \ref{thm: no Makarov} is tight in the following sense: 
If we choose $S$ and $x \in S$ such that $|S| = 2^k$ and $h_S(x) \geq c \tfrac{k}{|S|}$ 
as in Example \ref{exm: tree expander}, then with $A= \set{x}$ we have $\eps = 2^{-k}$, and we 
see that the $\log \tfrac{1}{\eps}$ factor in Theorem \ref{thm: no Makarov} cannot be removed without
further assumptions.
\end{rem}

Let $G = (\Z / n \Z)^d$ for $d \geq 3$ be the $d$-dimensional torus.  
As remarked in the introduction, the harmonic measure of a subset in $G$ 
is supported on small sets.  
However, since Theorem \ref{thm: no Makarov} is quantitative, we can bound the
size of the support.

\begin{cor}
There exists a constant $C>0$ so that the following holds for all small $\eps>0$.
Let $n$ be large enough and let $G = (\Z / n \Z)^d$ for $d \geq 3$.
Then, for any set $S \subset G$ we have that 
if $A \subset S$ satisfies $|A| \leq \tfrac{\eps}{n^2 \log n} |S|$
then the harmonic measure of $A$ is bounded by 
$h_S(A) \leq C \eps$.
\end{cor}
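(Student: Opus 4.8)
The plan is to reduce the corollary directly to Theorem~\ref{thm: no Makarov} by controlling the spectral gap $1-\lambda$ of simple random walk on $G = (\Z/n\Z)^d$. The key input is the classical fact that for the $d$-dimensional discrete torus the spectral gap satisfies $1-\lambda = \Theta(n^{-2})$, with the implicit constant depending only on $d$. Indeed, the eigenvalues of simple random walk on $(\Z/n\Z)^d$ are $\tfrac1d \sum_{j=1}^d \cos(2\pi k_j/n)$ for $k \in (\Z/n\Z)^d$, and the second-largest in absolute value corresponds to a single $k_j = 1$, giving $\lambda_2 = 1 - \tfrac1d(1-\cos(2\pi/n)) = 1 - \Theta(n^{-2})$; one checks separately that the most negative eigenvalue is not an obstruction (or invokes the laziness reduction already made in the preliminaries). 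Thus there is a constant $c_d > 0$ with $1-\lambda \geq c_d/n^2$ for all large $n$.

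Next I would apply Theorem~\ref{thm: no Makarov} in its ``simple random walk on a regular graph'' form. The torus is $2d$-regular, so $\pi$ is uniform and $\pi(A)/\pi(S) = |A|/|S|$. Set $\eps' = \eps / (c_d' n^2 \log n)$ for an appropriate constant $c_d'$ absorbing $c_d$; the hypothesis $|A| \leq \tfrac{\eps}{n^2 \log n}|S|$ then says $|A| \leq \eps' |S|$ up to this constant, so the theorem gives
\[
h_S(A) \leq C_0 \, \frac{\eps' \log(1/\eps')}{1-\lambda} \leq C_0 \, \frac{n^2}{c_d} \cdot \eps' \log(1/\eps').
\]
Now plug in $\eps' \asymp \eps/(n^2\log n)$: the leading $n^2$ cancels against the $n^2$ in $1/(1-\lambda)$, and $\log(1/\eps') = \log(c_d' n^2 \log n/\eps) = O(\log n + \log(1/\eps))$, which is $O(\log n)$ for fixed small $\eps$ and large $n$. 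So $h_S(A) = O\big(\tfrac{\eps}{\log n} \cdot \log n\big) = O(\eps)$, yielding $h_S(A) \leq C\eps$ for a suitable $C$ depending only on $d$.

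The only genuine point requiring care is the first step: citing (or briefly justifying) the $\Theta(n^{-2})$ spectral gap bound for $(\Z/n\Z)^d$, and making sure the laziness convention of the preliminaries is compatible so that $\lambda = \lambda_2$ rather than $|\lambda_n|$ controls things; this is standard (see, e.g., \cite[Chapter 12]{LevinPeresWilmer}) and introduces only a $d$-dependent constant. Everything after that is bookkeeping with logarithms, checking that the $n^2 \log n$ in the denominator of the hypothesis is exactly what is needed to convert the $1/(1-\lambda) \asymp n^2$ blow-up and the extra $\log(1/\eps') \asymp \log n$ factor in Theorem~\ref{thm: no Makarov} into a clean $O(\eps)$ bound. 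I would also remark that the $d\geq 3$ hypothesis is not actually used in this argument beyond matching the setting of the surrounding discussion — the bound is purely a consequence of the polynomial decay of the spectral gap — but it is consistent with the transience context in which the corollary is stated.
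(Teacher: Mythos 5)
Your proposal is correct and follows essentially the same route as the paper, which simply plugs the classical bound $1-\lambda \geq c_d n^{-2}$ for the torus into Theorem~\ref{thm: no Makarov} and checks that the $n^2\log n$ in the hypothesis absorbs both the $1/(1-\lambda)$ factor and the $\log(1/\eps')=O(\log n)$ factor. Your side remark is also apt: the paper's proof mentions uniform transience for $d\geq 3$, but Theorem~\ref{thm: no Makarov} only involves the spectral gap, so that hypothesis plays no role in the bound, exactly as you observe.
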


\begin{proof}
It is well known that for $G = (\Z / n \Z)^d$ the spectral gap 
is bounded by $1-\lambda \geq c n^{-2}$ for some constant $c>0$ (depending only on the dimension $d$).
Also, for $d \geq 3$ the $d$-dimensional tori $(\Z/n\Z)^d$ are uniformly transient (this follows from 
the fact that $\Z^d$ is transient for $d \geq 3$).
The corollary now follows by plugging this into Theorem \ref{thm: no Makarov}.
\end{proof}

\subsection{A characterization of expanders}

Theorem \ref{thm: no Makarov} shows that for a sequence of expander graphs $(G_n)_n$,
for any set, it is not possible for small subsets to carry $\frac12$ of the harmonic measure.
Anna Erschler asked if this characterizes expander graphs.  Indeed this is the content of 
this subsection.

Let us first define two quantities associated with a reversible Markov chain $P$ on finite states space $G$.
For a subset $S \subset G$ define 
$$ \p S : = \set{ x \in S \ : \  \exists \ y \not\in S \ , \ P(y,x) > 0 } \AND S^{\circ} = S \setminus \p S .  $$
$\p S$ are the sites accessible from outside of $S$ by one step of the Markov chain.
If $\pi$ is the reversing measure for $P$, define
$$ \Phi = \Phi(G) : = \min_{\substack{ S \subset G \\ 0 < \pi(S) \leq \tfrac12  } } \frac{ \pi(\p S) }{ \pi (S) } . $$
(This is the so called {\em Cheeger constant}.)
It is immediate that $\Phi \in (0,1]$ and it is well known that $C^{-1} \Phi^2 \leq 1- \lambda \leq C \Phi$
for some universal constant $C>0$, and $1-\lambda$ the spectral gap of $G$.

For a set $S \subset G$ define 
$$ \beta_S : = \min_{h_S(A) \geq \tfrac12 } \frac{\pi(A) }{ \pi(S) } \AND 
\beta = \beta(G) : = \min_{\pi(S) \leq \tfrac12 } \beta_S . $$ 
Of course $\beta \in (0,1]$.
Note that with this definition the content of Theorem \ref{thm: no Makarov} is that 
$\beta \log \tfrac{1}{\beta} \geq \tfrac1{2 C} (1-\lambda)$.

Thus, for a sequence of expander graphs $(G_n)_n$, the sequence $\beta(G_n)$ is uniformly 
bounded away from $0$.
The following theorem provides a complementary bound. 
Specifically it shows that $(G_n)_n$ is a sequence of expanders if and only if the sequence 
$(\beta(G_n))_n$ is uniformly bounded away from $0$.

\begin{prop}
\label{prop: expander characterization}
We have $\beta(G) \leq \Phi(G)$.
Consequently, $(G_n)_n$ is a sequence of graphs with $\inf_n \beta(G_n) > 0$
if and only if $(G_n)_n$ is a sequence of expander graphs.
\end{prop}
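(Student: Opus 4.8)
The plan is to prove the inequality $\beta(G) \le \Phi(G)$ directly, and then deduce the characterization by combining it with Cheeger's inequality $1-\lambda \ge C^{-1}\Phi^2$ and the estimate recorded just before the proposition, namely $\beta \log \tfrac1\beta \ge \tfrac{1}{2C}(1-\lambda)$, which is the content of Theorem~\ref{thm: no Makarov}.

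For the inequality itself, the key observation is that for \emph{every} $S$ with $0 < \pi(S) \le \tfrac12$ the inner boundary $\p S$ already carries at least half of the harmonic measure $h_S$. Indeed, by the definition of $\p S$ a random walk started from any $y \notin S$ must enter $S$ through $\p S$, so $h_{y,S}$ is supported on $\p S$ and $\sum_{x \in \p S} h_{y,S}(x) = 1$; while for $y \in S$ one has $h_{y,S} = \delta_y$, so $\sum_{x\in\p S} h_{y,S}(x) = \mathbf{1}_{y \in \p S}$. Summing against $\pi$ gives
\[ h_S(\p S) = \pi(G\setminus S) + \pi(\p S) \ge 1 - \pi(S) \ge \tfrac12. \]
Hence $A = \p S$ is admissible in the definition of $\beta_S$, so $\beta_S \le \pi(\p S)/\pi(S)$ for each such $S$. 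Taking the minimum over all $S$ with $\pi(S) \le \tfrac12$ yields $\beta(G) \le \Phi(G)$.

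For the "consequently" part: if $(G_n)_n$ are expanders then $\inf_n(1-\lambda_n) > 0$, and the bound $\beta(G_n) \log\tfrac{1}{\beta(G_n)} \ge \tfrac1{2C}(1-\lambda_n)$ forces $\inf_n \beta(G_n) > 0$, since $t \mapsto t\log\tfrac1t \to 0$ as $t\to 0^+$, so its values near $0$ eventually drop below the fixed positive constant $\tfrac1{2C}\inf_n(1-\lambda_n)$. Conversely, if $\inf_n \beta(G_n) > 0$ then by $\beta(G_n)\le\Phi(G_n)$ we get $\inf_n \Phi(G_n) > 0$, and Cheeger's inequality $1-\lambda_n \ge C^{-1}\Phi(G_n)^2$ gives $\inf_n(1-\lambda_n) > 0$, i.e.\ $(G_n)_n$ are expanders.

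I do not anticipate a serious obstacle: the whole argument rests on the elementary remark that a walk enters $S$ only through $\p S$, after which it is bookkeeping. The one point that deserves care is the implication "$\beta\log\tfrac1\beta$ bounded below $\Rightarrow$ $\beta$ bounded below," which I would justify explicitly via the behaviour of $t\log\tfrac1t$ near $0$ rather than leave implicit; everything else is routine.
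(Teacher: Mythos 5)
Your proposal is correct and follows essentially the same route as the paper: the key point in both is that the inner boundary $\p S$ satisfies $h_S(\p S)=1-\pi(S^\circ)\geq\tfrac12$, making it admissible in the definition of $\beta_S$ and giving $\beta_S\leq\pi(\p S)/\pi(S)$, after which the equivalence follows from Theorem~\ref{thm: no Makarov} and Cheeger's inequality exactly as you describe. The only (immaterial) difference is that the paper evaluates the bound at a set achieving the Cheeger minimum, while you prove it for every admissible $S$ and then minimize.
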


\begin{proof}
Let $S \subset G$ be a set such that 
$\pi(S) \leq \tfrac12$ and $\pi(\p S)  = \Phi \cdot \pi(S)$ (a Folner set). 
Note that for all $y \in S^{\circ}$ we have $h_{y,S}(x) = \1{x=y}$, so $h_{y,S}(\p S) = 0$,
and for all $y \not\in S^{\circ}$ we have $h_{y,S}(\p S) = 1$. Thus,
$$ h_S(\p S) = 1 - \pi(S^{\circ} ) = 1 - \pi(S) + \pi(\p S) = 1 - (1-\Phi) \pi(S)  . $$
So
$$ h_S(\p S) \geq \frac{1 + \Phi}{2} > \tfrac12 . $$
Thus,
$$ \beta \leq \beta_S \leq \tfrac{\pi(\p S) }{ \pi(S) } = \Phi  . $$

Finally, since $\beta \log \frac1{\beta} \geq c (1-\lambda)$ by Theorem \ref{thm: no Makarov}, 
we get that for a sequence of graphs $(G_n)_n$, the spectral gap is uniformly bounded away from $0$
if and only if $\inf_n \beta(G_n) > 0$.
\end{proof}

\begin{rem}
It is worth noting that for non-expanders one may also find sets such that $\frac12$ the harmonic
measure is supported on subsets of the {\em boundary} that are much smaller than the boundary itself
(not just much smaller than the set).

For example, in a $d$-regular graph $G$, 
if $S \subset G$ has $|\p S| = \Phi \cdot |S|$ and $|S| \leq \frac{|G|}{2}$ (a Folner set),
we may augment $S$ by removing $k = \lfloor \frac{|S^{\circ} |}{d+1} \rfloor$ isolated vertices from $S^{\circ}$,
so that the resulting set $R$ has $|\p R| \geq k + |\p S| \geq c  |S|$,
where $c>0$ depends only on the degree $d$.

However, since the vertices removed are from the interior $S^{\circ}$, 
we still have that for $y \not\in S^{\circ}$ the harmonic measure of $R$ is supported on $\p S \subset \p R$.
So $h_R(\p S) \geq \frac{|G| - |S^{\circ} |}{|G|} > \frac12$.
Also, $|\p S| = \Phi \cdot |S| \leq \Phi \cdot c^{-1} | \p R | $.

Thus, if $(G_n)_n$ is a non-expander sequence, we may find $S_n \subset G_n$
and $A_n \subset \p S_n$ such that 
$h_{S_n}(A_n) > \frac12$ for all $n$ and $\frac{|A_n|}{|\p S_n| } \to 0$ as $n \to \infty$.
\end{rem}

\section{An application to $\DLA$ on expanders}

Diffusion Limited Aggregation, or $\DLA$, is a model introduced by Witten \& Sander \cite{WittenSander}
in which particles are aggregated using the harmonic measure from infinity;
that is, at each time step a particle is released from infinity in $\Z^d$, and performs a random walk 
until hitting the existing aggregate.  Once hitting the aggregate it sticks to the first position it hits.
This model has long resisted rigorous analysis and is considered a very difficult.
Perhaps the only notable result is a bound of Kesten \cite{Kesten87DLA, Kesten90DLA} that shows that
the growth rate of the $\DLA$ aggregate is not too rapid.
Kesten utilizes a discrete Beurling estimate:  he shows that the harmonic measure 
of any point in a connected subset in $\Z^d$ of some diameter cannot be too large.
He then obtains a lower bound on the time it takes a $\DLA$ aggregate to reach distance $r$
using this estimate.
(For more on harmonic measure from infinity, Beurling estimates and $\DLA$ see also \cite{LawlerIntersections}.)
Being such a difficult model to analyze, other variants of $\DLA$ have been considered.
Examples in the non-amenable (\ie expanding) setting include \cite{BPP, Eldan}.

Let us define $\DLA$ properly in our setup: the finite graph case.

\begin{dfn}
Let $G$ be a finite graph and fix $s,e \in G$ as start and end vertices.
Diffusion Limited Aggregation, or $\DLA$, on $G$
is the process $\set{s} = A_0 \subset A_1 \subset A_2 \subset \cdots \subset G$ defined as follows:

Start with $A_0 = \set{s}$.  At each time step $t>0$ let $a_t$ be a random vertex with distribution given
by $h_{\p A_{t-1} }$; that is, $a_t$ is the first point in $\p A_{t-1}$ hit by a random walk started from stationarity.
Set $A_t = A_{t-1} \cup \set{a_t}$.

Stop the process at time $\tau = \inf \set{ t  \ : \  e \in A_t }$.
We use the convention that $A_t = A_\tau$ for all $t > \tau$.
\end{dfn}

We now proceed to prove a lower bound on the volume of the final aggregate in $\DLA$ on a finite graph,
which is an upper bound on the speed the aggregate grows.
We first consider the case of expander graphs, \ie those with bounded spectral gap.

First an auxiliary large deviations calculation:
\begin{lem}
\label{lem: LD}
Let $B = \sum_{n=1}^k Z_n$ for independent Bernoulli random variables $(Z_n)_n$,
each of mean $\E Z_n = p_n$.  Then, for any $C>1$ we have
$$ \Pr [ B \geq C \E B ] \leq \exp \sr{ - \E B \cdot C \log (C/e)  } . $$
\end{lem}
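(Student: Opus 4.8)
The plan is to use a standard Chernoff-type bound for sums of independent Bernoulli random variables, optimizing the exponential moment. First I would write $\mu = \E B = \sum_{n=1}^k p_n$, and for any $\theta > 0$ apply Markov's inequality to $e^{\theta B}$:
$$ \Pr[B \geq C\mu] \leq e^{-\theta C \mu} \, \E e^{\theta B} = e^{-\theta C\mu} \prod_{n=1}^k \E e^{\theta Z_n} = e^{-\theta C\mu} \prod_{n=1}^k \bigl( 1 + p_n(e^\theta - 1) \bigr) . $$
Using the elementary inequality $1 + x \leq e^x$ with $x = p_n(e^\theta-1)$, the product is at most $\exp\bigl( (e^\theta - 1)\sum_n p_n \bigr) = \exp\bigl( \mu(e^\theta-1) \bigr)$, so
$$ \Pr[B \geq C\mu] \leq \exp\bigl( \mu (e^\theta - 1) - \theta C \mu \bigr) = \exp\bigl( \mu( e^\theta - 1 - \theta C) \bigr) . $$

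Next I would optimize over $\theta$: the exponent $e^\theta - 1 - \theta C$ is minimized at $\theta = \log C$ (valid since $C > 1$ forces $\theta > 0$), giving value $C - 1 - C\log C$. Hence
$$ \Pr[B \geq C\mu] \leq \exp\bigl( \mu( C - 1 - C\log C) \bigr) = \exp\bigl( -\mu( C\log C - C + 1) \bigr) . $$
Finally I would compare this with the claimed bound, whose exponent is $-\mu \cdot C\log(C/e) = -\mu(C\log C - C)$. Since $C\log C - C + 1 \geq C\log C - C$, the bound I derived is at least as strong as the one in the statement, which completes the proof. I do not anticipate any real obstacle here; the only mild point to be careful about is that the claimed exponent $C\log(C/e) = C\log C - C$ can be negative for $C$ close to $1$ (it is negative for $1 < C < e$), so in that regime the stated inequality is vacuous and there is nothing to check — the derivation above still goes through and in fact gives a genuinely nontrivial bound whenever $C\log C - C + 1 > 0$, i.e. for all $C > 1$.
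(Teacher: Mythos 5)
Your proof is correct and follows essentially the same route as the paper: bound the moment generating function via $1+x\le e^x$, apply Markov's inequality, and optimize at $e^\theta=C$, then observe that the resulting exponent $\E B\,(C-1-C\log C)$ is at most the claimed $-\E B\cdot C\log(C/e)$. Your closing remarks (making the final comparison explicit and noting the bound is vacuous for $1<C<e$) are sound but add nothing beyond what the paper's argument already yields.
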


\begin{proof}
We use the well known method by Bernstein.
For $\alpha > 0$ we may bound the exponential moment of $B$ as follows:
\begin{align*}
\E e^{\alpha B} & = \prod_{n=1}^k \E e^{\alpha Z_n} = \prod_{n=1}^k ( (e^{\alpha} -1) p_n + 1 )
\leq \exp \sr{ ( e^{\alpha} -1) \E B } .
\end{align*}
By Markov's inequality,
$$ \Pr [ B \geq C \E B ] = \Pr [ e^{\alpha B} \geq e^{\alpha C \E B} ] \leq \exp \sr{ ( e^{\alpha} - 1) \E B - \alpha C \E B } . $$
So we wish to minimize the term $e^{\alpha} - 1 - \alpha C$ over positive $\alpha$.
Taking derivatives this is minimized when $e^{\alpha} = C$ (recall that $C>1$), so
$$ \Pr [ B \geq C \E B ] \leq \exp \sr{ \E B \cdot (C-1 - C \cdot \log C ) } . $$
\end{proof}

\begin{thm}
\label{thm: DLA expander}
Let $(G_n)_n$ be a sequence of expander graphs (\ie the spectral gap $1-\lambda$ is uniformly bounded below)
of maximal degree $d$.
For every $n$ let $s,e$ be vertices realizing the diameter of $G_n$, and
consider $\DLA$ on $G_n$ starting at $s$ and ending when first absorbing $e$.
Then, with probability tending to $1$ as $n \to \infty$, the final $\DLA$ aggregate will contain
at least $|G_n|^c$ particles, where $c>0$ is some constant (independent of $n$).
\end{thm}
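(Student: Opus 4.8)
The plan is to show that the $\DLA$ aggregate must be large by bounding how much harmonic measure any single vertex can carry, and then arguing that absorbing the endpoint $e$ requires first absorbing a long path's worth of vertices, each of which takes many steps to be added.

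First I would observe that the aggregate grows one vertex at a time, so at time $t$ we have $|A_t| = t+1$. The key point is that $e$ is at distance $\mathrm{diam}(G_n)$ from $s$, and since $G_n$ is an expander of bounded degree $d$, we have $\mathrm{diam}(G_n) \geq c_1 \log |G_n|$. For $e$ to enter the aggregate, the aggregate must at some earlier time have become connected to within distance $1$ of $e$; since $A_t$ is connected and contains $s$, at the time $\tau$ when $e$ is absorbed the set $A_{\tau-1}$ contains a path from $s$ to a neighbor of $e$, hence $|A_\tau| \geq \mathrm{diam}(G_n) \geq c_1 \log|G_n|$. This alone is only a logarithmic lower bound; to get the polynomial bound $|G_n|^c$ we must instead argue that the process is \emph{slow}: each step $t$ takes a long time in the sense that at step $t$, when the aggregate $A_{t-1}$ still has $\pi(A_{t-1}) = \Theta(t/|G_n|)$ small, the particle $a_t$ is chosen according to $h_{\p A_{t-1}}$, and Theorem~\ref{thm: main} tells us $h_{\p A_{t-1}}(x) \leq \frac{C'}{(1-\lambda)^2}\cdot(\frac{1}{|A_{t-1}|} \vee \frac{\log N}{N})$ for each $x$. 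Wait — that bounds the max mass on a point, which is the wrong direction for a speed bound. Let me reconsider.

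The correct mechanism: to reach $e$, the aggregate must at some point absorb, for each $j = 1, \dots, \mathrm{diam}(G_n)$, a vertex at distance exactly $j$ from $s$ (before which only vertices at distance $< j$ or the current "frontier" are present). The harmonic measure from stationarity of hitting a specific far-away vertex is tiny: when the aggregate is contained in a ball of radius $r$ around $s$, the probability that a stationary walk hits $\p A_{t-1}$ at a particular vertex on the outer frontier before hitting the bulk of the aggregate is small, because the aggregate is small ($\pi(A_{t-1})$ small) and most of its harmonic measure sits on the part closest to where the walk enters. Concretely, I would use that the number of steps $t$ at which the aggregate \emph{advances} its radius (i.e.\ $\mathrm{rad}(A_t) > \mathrm{rad}(A_{t-1})$) is at least $\mathrm{diam}(G_n)$, and that the probability of an advance at step $t$ is bounded by the harmonic measure of the outermost shell $\p A_{t-1} \cap \{y : \dist(s,y) = \mathrm{rad}(A_{t-1})\}$, which by Lemma~\ref{lem: expander mixing}-type reasoning (a stationary walk hits the interior of $A_{t-1}$ with probability $\approx \pi(A_{t-1})$ before escaping) is at most $O(\pi(A_{t-1})) = O(|A_{t-1}|/|G_n|)$, so advances become exponentially unlikely to happen quickly. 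Summing: if $|A_t| \leq |G_n|^c$ for all $t \leq T$, then in the first $T$ steps the expected number of radius-advances is at most $\sum_{t \leq T} O(|G_n|^{c-1}) = O(T|G_n|^{c-1})$, and using Lemma~\ref{lem: LD} to concentrate this, the aggregate cannot have advanced $\mathrm{diam}(G_n) \geq c_1\log|G_n|$ times unless $T \gtrsim |G_n|^{1-c}\log|G_n|$; since $T = |A_T| - 1$, this forces $|A_\tau| \gtrsim |G_n|^{1-c}$, and choosing $c$ appropriately closes the loop to give $|A_\tau| \geq |G_n|^{c'}$ for a fixed $c'>0$ with probability $\to 1$.

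The main obstacle I expect is making the per-step advance probability bound rigorous: I need that when $A_{t-1}$ is contained in the ball of radius $\rho := \mathrm{rad}(A_{t-1})$ around $s$, the $h_{\p A_{t-1}}$-mass of the outermost shell is at most $O(\pi(A_{t-1}))$ up to spectral-gap factors. This should follow from a path-reversal argument in the spirit of Proposition~\ref{prop: har bound}: $h_S(\text{outer shell}) = \sum_{x \text{ outer}} \pi(x) \Pr_x[T_y < T_S^+]/(\cdots)$, combined with the observation that from an outer vertex $x$ the walk, before returning to $S$, is unlikely to be the \emph{first} hit, or more simply by noting $h_S(\text{outer shell}) \leq \Pr_\pi[T_{\text{outer shell}} < T_{S^\circ}]$ and controlling this by comparing to the hitting time of the much larger interior $S^\circ$ using Lemma~\ref{lem: expander mixing} and uniform transience (Lemma~\ref{lem: uniform transience}). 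A secondary technical point is handling the adaptivity — $A_{t-1}$ is random and the shells move — which I would address by conditioning on the filtration up to time $t-1$ and applying the bounds pathwise, so that the Bernoulli advance indicators are stochastically dominated by independent Bernoullis of the stated means, making Lemma~\ref{lem: LD} applicable. I would also need the elementary geometric fact that $\mathrm{diam}(G_n) \geq c_1 \log_d |G_n|$ for bounded-degree graphs, which is immediate since balls grow at most geometrically.
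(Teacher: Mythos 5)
Your overall strategy (show the process is slow, so that reaching $e$ at distance $\mathrm{diam}(G_n)\geq c_1\log_d|G_n|$ forces many particles) is the right one, but the central estimate you rely on is false. You claim that the probability of a \emph{radius advance} at step $t$ is bounded by the harmonic measure of the outermost shell of $\p A_{t-1}$, and that this is $O(\pi(A_{t-1}))=O(t/|G_n|)$ because ``a stationary walk hits the interior of $A_{t-1}$ with probability $\approx\pi(A_{t-1})$ before escaping.'' There is no ``escaping'' on a finite graph: $h_{\p A_{t-1}}$ is a probability measure that sums to $1$ over the boundary, conditional on the walk eventually hitting it (which it always does), and a walk started from stationarity has no reason to prefer the part of the aggregate near $s$ over the outer frontier. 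If $A_{t-1}$ is a geodesic path of length $t$ from $s$, the tip's neighbourhood carries harmonic measure of order $1/t$ (and by Theorem~\ref{thm: main} / Example~\ref{exm: tree expander} possibly $\log/t$), not $t/|G_n|$; and if the aggregate is ball-like, the outer frontier can carry a \emph{constant} fraction of the harmonic measure, so the advance probability need not be small at all. Your intuition that ``most of the harmonic measure sits where the walk enters'' is a from-infinity/Euclidean picture that does not apply here. With the false bound $O(t/N)$ your conclusion $|A_\tau|\gtrsim|G_n|^{1-c}$ is far stronger than the theorem, which is itself a warning sign; with the correct per-step information, counting radius advances alone cannot work, because only $\Theta(\log|G_n|)$ advances are needed and each can have probability bounded away from $0$.

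The missing idea is Kesten's: instead of asking whether \emph{some} vertex at the next distance is absorbed, fix a specific self-avoiding path $v_0,\dots,v_m$ and require it to be \emph{filled in order}. Then at each step the target is a single prescribed vertex $u_n$, and Theorem~\ref{thm: main} (the bound you set aside as ``the wrong direction'') gives exactly the needed estimate $h_{\p A_{n-1}}(u_n)\leq C(1-\lambda)^{-2}/n$; the indicators are dominated by independent Bernoullis of these means, Lemma~\ref{lem: LD} gives $\Pr[\text{filled in order}]\leq e^{-m\log(C/e)}$ when $m=C\E B$ with $\E B\asymp(1-\lambda)^{-2}\log(1+k/t)$, and a union bound over the at most $td^m$ candidate paths (this is where the adaptivity and the ``we don't know which path'' issue are absorbed) shows that the radius cannot grow by $\dist(s,e)=\Theta(\log|G_n|)$ in fewer than $e^{c(1-\lambda)^2\dist(s,e)}=|G_n|^{c'}$ steps. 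Your secondary points (conditioning on the filtration to get stochastic domination, $\mathrm{diam}\geq c_1\log_d|G_n|$) are fine and are indeed used, but they do not rescue the main step.
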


\begin{proof}
We adapt an argument of Kesten, see \cite{Kesten87DLA, Kesten90DLA}.

For a self-avoiding path $v_1,v_2,\ldots,v_m$ in $G$ we say that $v_1,\ldots,v_m$ are {\em filled in order}
if there exist $0 \leq t_1 < t_2 < \cdots < t_m \leq \tau$ such that $a_{t_j} = v_j$ for all $j$ (where $a_0=s$).
(on this event it may be that particles stick to other vertices in between $v_j, v_{j+1}$, but it cannot be that 
a particle sticks to $v_j$ before some particle sticks to $v_i$ for $j>i$).

Let $r(t) = \max_{x \in A_t} \dist(x,s)$ be the diameter of the aggregate at time $t$.
Note that $r(\tau) = \dist(e,s)$.

If for some $k > 0$ we have $r(t+k) = r(t) + m$, there must exist 
a self avoiding path $v_0,v_1,\ldots,v_m$ such that $v_0 \in A_t$, $\dist(v_0,s) = r(t)$, 
$v_m \in A_{t+k}$,
and $v_0,\ldots,v_m$ is filled in order.

The number of choices for such path $v_0,v_1,\ldots,v_m$ is at most $|A_t| \cdot d^m = t d^m$,
where $d$ is the maximal degree in $G$.

Fix some such self avoiding path $v_0,v_1,\ldots,v_m$.
For every $t+1 \leq n \leq t+k$ define $u_n$ to be the unique vertex $v_j$ such that
$\set{ v_0,v_1 , \ldots, v_{j-1} } \subset A_{n-1}$ and $v_j \not\in A_{n-1}$.  That is, $u_n$ is the upcoming vertex
in the path $v_1,\ldots,v_m$ that needs to be filled by the $\DLA$ process.

As long as $|A_t| = t \leq \frac{|G|}{\log (2e |G|) }$ we have that
$h_{A_t}(x) \leq C (1-\lambda)^{-2} \cdot \tfrac{1}{|A_t| } = \frac{C}{(1-\lambda)^2 t}$,
by Theorem \ref{thm: main}.
Thus, if we define
$$ I = \sum_{n=t+1}^{t+k} \1{ a_n = u_n } \AND B = \sum_{n=t+1}^{t+k} Z_{n-1} , $$
we have that $I$ is stochastically dominated by $B$,
where
$(Z_n)_n$ are independent Bernoulli random variables with mean $\E[Z_n] = C (1-\lambda)^{-2} n^{-1}$.
(If $t+k \geq \tau$ then some of the indicators in the sum for $I$ are $0$.)
However, in order for $v_0,v_1,\ldots,v_m$ to be filled in order we must have
that $I \geq m$.  Thus, using Lemma \ref{lem: LD}, if $m = C \E B$ for some $C>1$,
the probability that $v_1,\ldots,v_m$ are filled in order is bounded by
\begin{align*}
\Pr [ I \geq m ] & \leq \Pr [ B \geq m ] \leq \exp \sr{ - m \log (C/e) } .
\end{align*}
Thus, taking $C'>1$ large enough (depending on $d$),
if we sum over all possible choices for the path $v_1,\ldots, v_m$,
we obtain that
$$ \Pr [ r(t+k) - r(t) \geq m ] \leq e^{ - m \log (C'/e) } \cdot t d^m \leq e^{- c m} . $$
for some constant $c>0$.
Since
$$ \E B = \tfrac{C}{(1-\lambda)^2} \sum_{n=0}^{k-1} \tfrac1{t+n} \leq \frac{C}{(1-\lambda)^2} \cdot \log (1 + k/t) , $$
we have that for some small enough constant $c>0$,
with $k = e^{ c (1-\lambda)^2 \dist(s,e) } = o(\frac{|G|}{\log (2e |G|) } )$,
$$ \Pr [ r(k) - r(1) \geq  \dist(s,e) - 1 ] \leq \exp \sr{- c' \dist(s,e) } . $$
Since $1-\lambda = \Theta(1)$ and $\dist(s,e) = \Theta(\log |G_n|)$ as $n \to \infty$ we get that
with high probability the $\DLA$ aggregate stops after $e^{c (1-\lambda)^2 \dist(s,e) } = \Omega(|G_n|^{c'})$ 
particles.
\end{proof}

\begin{ques}
For a sequence of expander graphs $(G_n)_n$, let $s,e$ be vertices realizing the diameter of $G_n$,
and consider $\DLA$ starting at $s$ and ending at $e$.
Is it true that with probability tending to $1$ as $n \to \infty$, the final $\DLA$ aggregate on $G_n$ will contain
$c |G_n|$ particles, $c>0$ a constant independent of $n$?
\end{ques}

%
%

%

\bibliographystyle{plain}


\end{document}